\author{Tamanna Chatterjee}
\address{University of Notre Dame}
\email{tchatter@nd.edu}
\date{\today}
\title{ Fourier transform on graded Lie algebras}
\theoremstyle{plain}
\newtheorem{theorem}{Theorem}[section]
\newtheorem{cor}[theorem]{Corollary}
\newtheorem{lemma}[theorem]{Lemma}
\theoremstyle{def}
\newtheorem{remark}[theorem]{Remark}
\newtheorem{conjecture}[theorem]{Conjecture}
\newtheorem{assume}[theorem]{Assumption}
\newtheorem{definition}[theorem]{Definition}
\newcommand{\mf}{\mathfrak}
\newcommand{\mc}{\mathcal}
\newcommand{\mb}{\mathbb}
\newcommand{\ms}{\mathscr}
\newcommand{\eal}{\end{align*}}
\newcommand{\bal}{\begin{align*}}
\DeclareMathOperator{\p}{Perv}
\DeclareMathOperator{\f}{For}
\DeclareMathOperator{\rhm}{R\mathscr{H}\text{\kern -3pt{\calligra\large om}}\, }
\DeclareMathOperator{\lie}{Lie}
\DeclareMathOperator{\h}{H}
\DeclareMathOperator{\cu}{cusp}
\DeclareMathOperator{\i'}{\mathscr{I}}
\DeclareMathOperator{\E}{\mathcal{E}}
\DeclareMathOperator{\ind}{^{\mathfrak{g},\mathfrak{p}}\mathcal{I}}
\DeclareMathOperator{\inn}{Ind}
\DeclareMathOperator{\mg}{\mathfrak{g}}
\DeclareMathOperator{\rnn}{Res}
\DeclareMathOperator{\res}{^ {\mathfrak{g},\mathfrak{p}}\mathcal{R}}
\DeclareMathOperator{\pno}{\mathfrak{p}_{n,\Omega}}
\DeclareMathOperator{\tL}{\tilde{L}}
\DeclareMathOperator{\tl}{\tilde{\mf{l}}}
\DeclareMathOperator{\tM}{\tilde{M}}
\DeclareMathOperator{\phign}{\Phi_{\mf{g}_n}}
\DeclareMathOperator{\tm}{\tilde{\mf{m}}}
\DeclareMathOperator{\pr}{pr}
\begin{document}
\maketitle

\begin{abstract}

In this paper, we extend Lusztig's result on the Fourier transform for graded Lie algebras on sheaves with coefficients in a field $\Bbbk$  of positive characteristic, assuming Mautner's cleanness conjecture. We consider the centralizer $G_0$ of a fixed cocharacter $\chi$ in a connected, reductive, algebraic group $G$ and its action on the eigenspaces $\mf{g}_n$ of $\chi$. In particular, we consider $G_0$-equivariant sheaves on $\mf{g}_n$ and their behavior with respect to induction and restriction functors. Because our field $\Bbbk$ has positive characteristic, we consider parity sheaves, and many of our arguments require significant adaptation from arguments in \cite{Lu}. We prove in particular that Fourier transform takes parity sheaves to parity sheaves, and preserves the set of cuspidal sheaves.
\end{abstract}

\section{Introduction}

In this paper, we investigate the Fourier transform for graded Lie algebras on sheaves with coefficients in a field $\Bbbk$ of positive characteristic. The geometry of the $\mathbb{Z}$-graded Lie algebra has been explored by Lusztig \cite{Lu} in characteristic $0$. The primary aim of this study is to extend Lusztig's results related to the Fourier transform on graded Lie algebras to positive characteristic.

To enhance our understanding of the representation theory of the Weyl group, a central objective of the generalized Springer correspondence has been to establish a block decomposition for the collection of all pairs consisting of nilpotent orbits and irreducible local systems on those orbits. This topic has been examined in \cite{Lu3}, \cite{Lu4}, and \cite{Lu5} for characteristic $0$, and in \cite{AJHR2}, \cite{AJHR3}, and \cite{AJHR*} for positive characteristic. Following this motivation, a long-term goal of this project is to find a similar block decomposition in the graded setting for the positive characteristic. As for generalized Springer theory, Fourier transform has played an important role in \cite{Lu} to define the block decomposition in the graded setting for sheaf coefficients of characteristic $0$.

Let $G$ be a complex, connected, reductive algebraic group, and let $\mathfrak{g}$ denote its Lie algebra. We fix a cocharacter $\chi: \mathbb{C}^\times \to G$. The group $\mathbb{C}^\times$ acts on $\mathfrak{g}$ via the adjoint action, and the space $\mathfrak{g}_n$ represents the $n$-th weight space under this action, thereby defining a grading on $\mathfrak{g}$. The centralizer $G_0$ of $\chi(\mathbb{C}^\times)$ also acts on $\mathfrak{g}_n$ via the adjoint action. 

The $G_0$-equivariant derived category of sheaves with coefficients in a field $\Bbbk$, $D^b_{G_0}(\mf{g}_n,\Bbbk)$ has been studied in \cite{Lu} when the characteristic of $\Bbbk$ is $0$ and  in \cite{Ch} when the characteristic is positive. In positive characteristic, parity sheaves, introduced in \cite{parity} play  a similar role to the intersection cohomology complexes ($\mc{IC}$'s) in characteristic $0$.  For any pair $(\mc{O},\mc{L})$, where $\mc{O}$ is a $G_0$-orbit in $\mf{g}_n$ and $\mc{L}$ is a $G_0$-equivariant irreducible local system on $\mc{O}$, there exists at most one indecomposable parity sheaf $\mc{E}$ up to shift with the property that $\mc{E}|_{\mc{O}}=\mc{L}[\dim \mc{O}]$, and we write $\mc{E}(\mc{O},\mc{L})$ to denote $\mc{E}$.   We will denote the collection of all such pairs $(\mc{O},\mc{L})$ by $\ms{I}(\mf{g}_n)$. Similarly, on the nilpotent cone, for each pair $(C,\mc{F})$, where $C$ is a $G$-orbit in the nilpotent cone $\mc{N}$ and $\mc{F}$ is a $G$-equivariant irreducible local system on $C$, there exists at most one parity sheaf, denoted by $\mc{E}(C,\mc{F})$ with the property, $\mc{E}(C,\mc{F})|_{C}=\mc{F}[\dim C] $.
We will denote this collection of all pairs $(C,\mathcal{F})$ by $\ms{I}(G)$. The existence of parity sheaves on the nilpotent cone  for all $(C,\mc{F})\in \ms{I}(G)$ has been established in \cite{parity}, and the author extended this result in \cite{Ch} for each $(\mc{O},\mc{L})\in \ms{I}(\mf{g}_n)$, the indecomposable sheaf $\mc{E}(\mc{O},\mc{L})$ exists provided $\Bbbk$ satisfies the Assumption \ref{assume}.

For a parabolic subgroup $P$ with a Levi subgroup $L$ containing $\chi(\mathbb{C}^\times)$, we define in \cite{Ch} two functors relating  the derived categories $D^b_{L_0}(\mf{l}_n,\Bbbk)$ and $D^b_{G_0}(\mf{g}_n,\Bbbk)$,\[\inn^{\mf{g}}_{\mf{p}}: D^b_{L_0}(\mf{l}_n,\Bbbk) \to D^b_{G_0}(\mf{g}_n,\Bbbk),
\] 
\[ \rnn^{\mf{g}}_{\mf{p}}: D^b_{G_0}(\mf{g}_n,\Bbbk)\to D^b_{L_0}(\mf{l}_n,\Bbbk).
\] 

Similar to the nilpotent cone, these two functors, induction and restriction play crucial roles in the study of the equivariant derived category in the graded setting. In \cite{Ch} the author proved when the characteristic of $\Bbbk$ meets certain assumptions (Assumption \ref{assume}),  $\inn^{\mathfrak{g}}_{\mathfrak{p}}$ preserves the parity complexes. In this paper, we first prove that,

\begin{theorem}
	$\rnn^{\mf{g}}_{\mf{p}}$ sends parity complexes to parity complexes when the characteristic of $\Bbbk$ satisfies Assumption \ref{assume}.
\end{theorem} 

Fourier transform on the equivariant sheaves on $\mf{g}$ has been studied extensively in \cite{HK}, \cite{Lu4}, \cite{Mi} for characteristic $0$ and for  the positive characteristic  in \cite{Ju}, \cite{AHJR4}, \cite{AM}. 
It plays an important role in the study of Springer theory and generalized Springer correspondence.


Lusztig's work on the Fourier transform on $\mf{g}_n$ in characteristic $0$ established that the Fourier transform sends cuspidal pairs \cite[~ 4.4]{Lu} to cuspidal pairs and semisimple complexes to semisimple complexes. The goal of this paper is to prove the appropriate analogs of these results in the positive characteristic. Motivated by Mautner's cleanness conjecture and previous research in modular representation theory, both on the nilpotent cone and on graded Lie algebras, we make specific assumptions about the  characteristic of $\Bbbk$ (Assumption \ref{assume}). Under these assumptions, we prove the following theorem on $\mf{g}_n$.
\begin{theorem}
\begin{enumerate}
	\item The Fourier Sato transform sends the $\mc{IC}$ sheaf associated to a  cuspidal pair to a $\mc{IC}$ sheaf associated to a cuspidal pair.
	\item The Fourier Sato transform sends parity sheaves to parity sheaves.
\end{enumerate}
	 
\end{theorem}

The results in this paper are  dependent on the Mautner's Conjectures (Conjecture \ref{cleanness} and Conjecture \ref{cuparity}).  The author and P. Achar have work towards proving these conjectures. This work is not yet complete.

	\subsection*{Outline}
	In Section \ref{sec1}, we establish the necessary background, assumptions, and notations. In Section \ref{sec2}, we define $\rnn^\mathfrak{g}_{\mathfrak{p}}$ in the graded setting and demonstrate that it sends parity complexes to parity complexes. Section \ref{sec4} introduces the Fourier transform for graded Lie algebras, where we prove that the Fourier transform maps cuspidal pairs to cuspidal pairs and parity complexes to parity complexes. 

\subsection*{Acknowledgement} 

The author wishes to thank her PhD advisor, Pramod N. Achar, for suggesting the original thesis problem from which this work developed. The author is deeply grateful to her current postdoc mentor, Sam Evens, for reading and providing extensive, valuable comments on multiple drafts; his input was crucial in shaping the final version of this paper.

\tableofcontents

\section{Background}\label{sec1}
In this section, we recall some definitions, notation, and previous results.
Let $\Bbbk$ be a field of characteristic $l\geq 0$. We  consider  sheaves with coefficients in $\Bbbk$. The varieties we consider are defined over $\mathbb{C}$. Let $H$ be a linear algebraic group and $X$ be an $H$-variety.  We denote by $D^b_H(X,\Bbbk)$ or $D^b_H(X)$, the  $H$-equivariant derived category of constructible sheaves, which is defined in \cite{BL}. Let  $\p_H(X,\Bbbk)$ be the full subcategory of $H$-equivariant perverse $\Bbbk$-sheaves. For a $\Bbbk$-module $V$, let $\underbar{V}_X$ or $\underbar{V}$ denote the constant sheaf on $X$ with fiber $V$.  

Let  $G$  be a connected, reductive, algebraic group over 
$\mathbb{C}$ and let $\mathfrak{g}$ be the Lie algebra of $G$. 
We  fix a cocharacter $\chi:\mathbb{C}^\times \to G$ and define: \[G_0=\{g\in G| g\chi(t)=\chi(t)g, \forall{t}\in \mathbb{C}^\times\}.\]
 For $n\in \mathbb{Z}$, define: \[\mathfrak{g}_n=\{x\in \mathfrak{g}|\hspace{1mm} \mathrm{Ad}(\chi(t))x=t^nx, \forall{t}\in \mb{C}^\times\}.\] This defines a grading on $\mathfrak{g}$,
 \[\mathfrak{g}=\bigoplus_{n\in \mathbb{Z}}\mathfrak{g}_n.\]Clearly, $\mathfrak{g}_0=\lie(G_0)$ and
 $G_0$ acts on $\mathfrak{g}_n$. 	For $n\neq0$, $G_0$ acts on $\mathfrak{g}_n$ with only finitely many orbits.	In \cite{Ch} we studied the $G_0$-equivariant bounded derived category of sheaves, $D^b_{G_0}(\mf{g}_n, \Bbbk)$, with some restriction on the field characteristic of $\Bbbk$ which will be discussed later in this section.

Here we will review some notation from \cite{Ch}.
Recall that $\mf{sl}_2$ is the Lie algebra of $SL_2$ generated by:
 \[e=\begin{pmatrix}
0&1\\0&0\\	
\end{pmatrix}, h=\begin{pmatrix}
1&0\\0&-1\\	
\end{pmatrix}, f=\begin{pmatrix}
0&0\\1&0\\	
\end{pmatrix}.
\]
 Let $J_n=\{\phi:\mf{sl}_2 \to \mf{g}|\hspace{1mm}\phi(e)\in \mf{g}_n, \phi(f)\in \mf{g}_{-n},\phi(h)\in \mf{g}_0\}$. We have an action of $G_0$ on $J_n$ by $(g,\phi)\to \mathrm{Ad}(g)\circ \phi$. 

 	The map from the set of $G_0$-orbits on $J_n$ to the set of $G_0$-orbits on $\mf{g}_n$, defined by $\phi \to \phi(e)$, is a bijection \cite[\text{Prop} ~3.3]{Lu}.	
 \subsection{Induction and restriction}
 Induction and restriction are two important functors both for the nilpotent cone and for graded Lie algebras which allow us to go from the sheaves on a Levi subalgebra to sheaves $\mf{g}$ or from the sheaves on $\mf{g}$ to sheaves a Levi subalgebra. Let $P$ be a parabolic subgroup of $G$ and $L$ be a Levi factor in $P$ with $U$, the unipotent radical.   For parabolic induction and restriction on the nilpotent cone we use the following diagram:
  \[\mathcal{N}_L\xleftarrow{\pi_P}\mc{N}_L+\mathfrak{u}\xrightarrow{e_P}G\times^{P}(\mc{N}_L+\mathfrak{u})\xrightarrow{\mu_P}\mc{N}_G
  .\]
 Here $\mathfrak{u}=\lie(U)$, $\pi_P, e_P$ are the obvious maps and $\mu_P(g,x)=\mathrm{Ad}(g)x$. Let \[i_P=\mu_P\circ e_P:\mc{N}_L+\mf{u}\to \mc{N}_G.\]
 
 The parabolic restriction functor,
  \[\rnn^G_P: D^b_G(\mc{N}_G,\Bbbk) \to D^b_L(\mc{N}_L,\Bbbk)
  \]is defined by $\rnn^G_P(\mathcal{F})={\pi_P}_!i_P^*\f^{G}_L(\mc{F})$. Here \[\f^G_L: D^b_G(\mc{N}_G,\Bbbk) \to D^b_L(\mc{N}_G,\Bbbk)\] is the forgetful functor.
   Parabolic induction is a functor:
   \[\inn^G_P:  D^b_L(\mc{N}_L,\Bbbk) \to D^b_G(\mc{N}_G,\Bbbk),  \]
and is defined by
   $\inn^G_P(\mc{F}):={\mu_P}_!(e_P^*\f^G_P)^{-1}\pi_P^*(\mc{F})$. 
 Here  $(e^*\f^G_P)^{-1}: D^b_P(\mc{N}_L+\mf{u}) \to D^b_G( G\times^P(\mc{N}_L+\mf{u}))$ is the induction equivalence map \cite{Ac}.
 
 We assume that the Levi subgroup $L$ contains $\chi(\mb{C}^\times)$.
 Let $\mathfrak{p}, \mathfrak{l}, \mathfrak{u}$ be the Lie algebras of $P,L,U$ respectively. Then $\mathfrak{p}, \mathfrak{l}, \mathfrak{u}$ inherit a grading from $\mathfrak{g}$.
 
 For induction and restriction on graded Lie algebras we use the diagram below:
 
 \begin{equation}\label{eq2.1}
 	\mf{l}_n\xleftarrow{\pi}\mf{p}_n\xrightarrow{e}G_0\times^{P_0}\mf{p}_n\xrightarrow{\mu}\mf{g}_n
 \end{equation}

 All the maps here have the same meaning as in the diagram above. 
 The  restriction,
 
 \[\rnn^{\mf{g}}_{\mf{p}}: D^b_{G_0}(\mf{g}_n,\Bbbk) \to D^b_{L_0}(\mf{l}_n,\Bbbk)
  \]is defined by $\rnn^{\mf{g}}_{\mf{p}}(\mathcal{F})={\pi}_!i^*\f^{G_0}_{L_0}(\mc{F})$ with $i: \mf{p}_n \xhookrightarrow{}  \mf{g}_n$ being the inclusion map. Here 
   the parabolic induction comes from the same diagram above:
   \[\inn^{\mf{g}}_{\mf{p}}:  D^b_{L_0}(\mf{l}_n,\Bbbk) \to D^b_{G_0}(\mf{g}_n,\Bbbk)  \]
and is defined by,
   $\inn^{\mf{g}}_{\mf{p}}(\mc{F}):={\mu}_!(e^*\f^{G_0}_{P_0})^{-1}\pi^*(\mc{F})$, where 
 $(e^*\f^{G_0}_{P_0})^{-1}: D^b_{P_0}(\mf{p}_n) \to D^b_{G_0}(G_0\times^{P_0}\mf{p}_n) $ is the induction equivalence map. 
 \subsection{The sets $\mathscr{I}(G,\Bbbk)$, $\mathscr{I}(\mathfrak{g}_n,\Bbbk)$ and cuspidal pairs}\label{subsec2.1} 
 
 The set $\mathscr{I}(G,\Bbbk)$ is the set of pairs $(C,\mathcal{E})$ where $C \subset \mathscr{N}_G$ is a nilpotent $G$-orbit in $\mathfrak{g}$ and $\mathcal{E}$ is an irreducible $G$-equivariant $\Bbbk$-local system on $C$ (up to isomorphism).  The number of $G$-orbits in  $\mc{N}_G$ is finite and $G$-equivariant irreducible $\Bbbk$ local systems on $C$ are in bijection with the irreducible $\Bbbk$-representations of the  component group $A_G(x):=G^x/(G^x)^o$, where $x$ is in $C$, $G^x=\{g\in G|\hspace{2mm} \mathrm{Ad}(g)x=x\}$ and,  $(G^x)^\circ$ is the identity component of $G^x$.  Hence  the set $\mathscr{I}(G,\Bbbk)$ is finite. Sometimes when there is no confusion about the field of coefficients, then we will just write $\mathscr{I}(G)$.
 
 Let $\mathscr{I}(\mathfrak{g}_n,\Bbbk)$ or $\mathscr{I}(\mathfrak{g}_n)$ be the set of all pairs $(\mathcal{O},\mathcal{L})$ where $\mathcal{O}$ is a  $G_0$-orbit in $\mathfrak{g}_n$ and $\mathcal{L}$ is an irreducible, $G_0$-equivariant $\Bbbk$-local system on $\mathcal{O}$(up to isomorphism). By similar reasoning as for $\ms{I}(G)$, $\mathscr{I}(\mathfrak{g}_n)$ is finite.

 A simple object $\mathcal{F}$ in $\p_G(\mathscr{N}_G,\Bbbk)$ is called cuspidal if $\rnn^G_P(\mc{F})=0$, for every proper parabolic $P$.
	 A pair $(C,\mathcal{E})\in \mathscr{I}(G)$, is called cuspidal if the corresponding simple perverse sheaf $\mathcal{IC}(C,\mathcal{E})$ is cuspidal. We will denote the collection of all cuspidal pairs on $\mc{N}_G$ by $\ms{I}(G)^{\cu}$. 
	 There is a modular reduction map \cite[~ 2.3]{AJHR} from $\ms{I}(G,\mb{K})$ to $\ms{I}(G,\Bbbk)$, where $\mb{K}$ is a field of characteristic $0$. A pair $(C,\mc{E})\in \ms{I}(G,\Bbbk)$ is called $0$-cuspidal if it is in the image of a cuspidal pair in $\ms{I}(G,\mb{K})$ under the modular reduction map. We  denote this collection by $\ms{I}(G)^{0-\cu}$ and $\ms{I}(G)^{0-\cu}\subset \ms{I}(G)^{\cu}$ \cite[Lemma ~2.3]{AJHR}.
	  A brief discussion on cuspidal pairs and $0$-cuspidal pairs can also be found in \cite[Section ~2]{Ch}.

\begin{definition}
A pair $(\mathcal{O},\mathcal{L})\in \mathscr{I}(\mathfrak{g}_n)$ will be called  cuspidal if there exists a pair $(C,\mc{E})\in \ms{I}(G)^{0-\cu}$, such that $C\cap \mf{g}_n=\mc{O}$ and $\mc{L}=\mc{E}|_{\mc{O}}$.
  We will denote the set of all cuspidal pairs on $\mf{g}_n$ by 
$\mathscr{I}(\mathfrak{g}_n)^{\cu}$.
\end{definition}
 
 \begin{definition}
 	A pair $(C,\mathcal{E})\in \mathscr{I}(G)$ is called $l$-clean if the corresponding sheaf $\mathcal{IC}(C,\mathcal{E})$ has vanishing stalks on $\bar{C}-C$. Similarly, a pair $(\mathcal{O},\mathcal{L})\in \mathscr{I}(\mathfrak{g}_n)$ is called $l$-clean if the corresponding sheaf $\mathcal{IC}(\mathcal{O},\mathcal{L})$ has vanishing stalks on $\bar{\mathcal{O}}-\mathcal{O}$. 
 \end{definition}

 \begin{remark}
 	If $(\mc{O},\mc{L})$ is $\ell$-clean then $\mathcal{IC}(\mathcal{O},\mathcal{L})= \mathcal{E}(\mathcal{O},\mathcal{L}) $.
 \end{remark}

A discussion  on cleanness can be found in \cite[~ 2.2]{Ch}.

\subsection{Parity sheaves}
We start this subsection with the following assumptions:

\begin{assume}\label{assume}
 	\begin{enumerate}
 		\item The characteristic $l$ of $\Bbbk$ is a pretty good prime for $G$ \cite[~ 2.6]{Ch}.
 		\item The field $\Bbbk$ is big enough for $G$;
i.e, for every Levi subgroup $L$ of $G$ and pair $(C_L,\mathcal{E}_L)\in \mathscr{I}(L)$, the irreducible $L$-equivariant $\Bbbk$-local system $\mathcal{E}_L $ is absolutely irreducible.  
\end{enumerate}
 \end{assume}

Parity sheaves were first introduced by Juteau, Mautner, and Williamson \cite{parity}. They are  constructible complexes on a stratified space where the strata satisfy some cohomology vanishing properties. Once these conditions are satisfied then for any stratified space $X$ with stratum $X_\lambda$ and local system $\mc{L}$ on $X_\lambda$, there exists at most one indecomposable parity sheaf, denoted by $\mc{E}(X_\lambda, \mc{L})$ with $\mc{E}(X_\lambda, \mc{L})|_{X_{\lambda}}=\mc{L}[\dim X_{\lambda}]$. A detailed discussion of parity sheaves is in \cite{parity} and a summarized version can be found in \cite[Subsection ~1.5]{Ch}. The cohomology vanishing property for our case is the following theorem that has been proved in \cite{Ch}.

\begin{theorem}[\cite{Ch}, Theorem 29]\label{th2.3}
	Under the Assumption \ref{assume}, for any pair $(\mc{O},\mc{L})\in \ms{I}(\mf{g}_n)$, we have:
	\[\h^i_{G_0}(\mc{O},\mc{L})=0 \text{  for $i$ odd.}\]
\end{theorem}

The next theorem provides the cohomology vanishing condition for the nilpotent cone.

\begin{theorem}
 Under the Assumption \ref{assume}, for any pair $(C,\mc{F})\in \ms{I}(G)$ we have,
	\[\h^i_{G}(C,\mc{F})=0 \text{  for $i$ odd.}\]
\end{theorem}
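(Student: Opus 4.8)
The plan is to reduce the statement for the nilpotent cone $\mc{N}_G$ to the graded statement of Theorem \ref{th2.3}, which is already available. The key observation is that a $G$-orbit $C$ in the nilpotent cone, together with a $G$-equivariant irreducible local system $\mc{F}$ on $C$, can be studied by intersecting with a suitable graded piece. Concretely, given $(C,\mc{F}) \in \ms{I}(G)$, I would first choose an $\mf{sl}_2$-triple through a point of $C$; by the Jacobson–Morozov theorem and the bijection between $G$-orbits on $J_n$ and $G_0$-orbits on $\mf{g}_n$ recalled in the excerpt (following \cite[Prop.~3.3]{Lu}), the semisimple element $h$ of such a triple can be taken to lie in a torus, and one can arrange a cocharacter so that $C$ meets the relevant graded component. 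The comparison of equivariant cohomologies then proceeds through the standard identification $\h^*_G(C,\mc{F}) \cong \h^*_{G^x}(\mathrm{pt}, \mc{F}_x) \cong \h^*_{A_G(x)}(\mathrm{pt}, \mc{F}_x) \otimes \h^*(BG^x{}^\circ)$, and similarly on the $G_0$-side; since the nilpotent cone carries a contracting $\mb{C}^\times$-action, one may further use that $\h^*_G(\mc{N}_G, -)$ of an equivariant local system pushed forward from an orbit is computed on that orbit.

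The main technical step is to show that the odd-vanishing on the relevant graded piece $\mf{g}_n$ forces the odd-vanishing on $\mc{N}_G$. Here I would argue as follows: pick the grading coming from the cocharacter $h/2$ attached to the $\mf{sl}_2$-triple (the "Dynkin grading"), so that $x = \phi(e) \in \mf{g}_2$ and the orbit $C \cap \mf{g}_2 =: \mc{O}$ is a single $G_0$-orbit with $\mc{L} := \mc{F}|_{\mc{O}}$ irreducible and $G_0$-equivariant, hence $(\mc{O},\mc{L}) \in \ms{I}(\mf{g}_2)$. The component groups agree, $A_G(x) = A_{G_0}(x)$, by the theory of $\mf{sl}_2$-triples (the reductive part of $G^x$ is contained in $G_0$), so $\h^*_{A_G(x)}(\mathrm{pt},\mc{F}_x) = \h^*_{A_{G_0}(x)}(\mathrm{pt},\mc{L}_x)$. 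The factor $\h^*(BG^x)$ versus $\h^*(B(G_0)^x)$: since $(G_0)^x$ is a Levi-type subgroup of $G^x$ and both have cohomology concentrated in even degrees (being cohomology of classifying spaces of connected reductive groups, up to the finite part already accounted for), the parity of each graded cohomology group is unchanged. Combining, $\h^*_G(\mc{F})$ and $\h^*_{G_0}(\mc{L})$ have the same parity behaviour, and Theorem \ref{th2.3} applied to $(\mc{O},\mc{L}) \in \ms{I}(\mf{g}_2)$ gives $\h^{\mathrm{odd}}_{G_0}(\mc{L}) = 0$, whence $\h^{\mathrm{odd}}_G(\mc{F}) = 0$.

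The hard part, and the place where I would expect to spend the most care, is the passage from the orbit $C$ to the global cohomology $\h^*_G(\mc{N}_G,\mc{F})$ — in the statement, $\mc{F}$ is an equivariant local system on an orbit $C \subset \mc{N}_G$, and $\h^*_G(\mc{F})$ should be read as hypercohomology of its extension by zero (or of the corresponding pushforward), so one must justify that this equals $\h^*_G(C,\mc{F})$ up to a shift that does not disturb parity. This is where the contracting $\mb{C}^\times$-action on $\mc{N}_G$ and the hyperbolic localization / parity considerations from \cite{parity} and \cite[Subsec.~2.5]{Ch} enter: the key input is precisely the analogue used to prove Theorem \ref{th2.3}, namely that equivariant cohomology of the local system on the stratum controls the global object. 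I would cite the parallel argument in \cite{Ch} rather than reproduce it. One subtlety to check is that the chosen grading (the Dynkin grading for $x$) is compatible with the ambient hypotheses on $\Bbbk$ (Assumption \ref{assume}) uniformly over all pairs $(C,\mc{F})$; if not, one chooses for each $(C,\mc{F})$ its own cocharacter, which is harmless since the statement is about each pair separately.
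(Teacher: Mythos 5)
First, a point of comparison: the paper does not prove this statement at all --- it is quoted directly from \cite{parity} (``This theorem has been proved in \cite{parity}''), so your proposal is not paralleling an argument in the text but attempting a new derivation, namely deducing the nilpotent-cone evenness from the graded evenness of Theorem \ref{th2.3}. The skeleton of that reduction is reasonable: for $x\in C$ complete $x$ to an $\mf{sl}_2$-triple $\phi$, take the Dynkin cocharacter, so that $\mc{O}=C\cap\mf{g}_2$ is the $G_0$-orbit of $x$, $\mc{L}=\mc{F}|_{\mc{O}}$, and --- crucially --- $G_0^x$ is exactly $G^\phi$, the centralizer of the whole triple, which is the reductive part of $G^x$; since the $G^x$-action on the stalk $\mc{F}_x$ factors through the component group, one gets an actual isomorphism $\h^*_{G^x}(\mathrm{pt},\mc{F}_x)\cong\h^*_{G_0^x}(\mathrm{pt},\mc{L}_x)$, and hence $\h^*_G(C,\mc{F})\cong\h^*_{G_0}(\mc{O},\mc{L})$, to which Theorem \ref{th2.3} (for that cocharacter) applies. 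Note also that $\h^*_G(\mc{F})$ here simply means $\h^*_G(C,\mc{F})$, so your final paragraph about extensions by zero and hyperbolic localization addresses a non-issue.

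The genuine gaps are in the middle step as you actually wrote it. The decomposition $\h^*_G(C,\mc{F})\cong \h^*_{A_G(x)}(\mathrm{pt},\mc{F}_x)\otimes\h^*(B(G^x)^\circ)$ is not valid with modular coefficients: one only has a spectral sequence involving the cohomology of the finite group $A_G(x)$, which can contribute in odd degrees when $l$ divides $|A_G(x)|$; untangling this is precisely where the assumptions on $\Bbbk$ do their work. Worse, your justification ``both have cohomology concentrated in even degrees (being cohomology of classifying spaces of connected reductive groups)'' is false in positive characteristic in general --- e.g.\ $\h^3(B\,PGL_2(\mb{C});\mb{F}_2)\neq 0$ --- and this evenness for centralizer groups is essentially the content of the theorem you are proving, so invoking it makes the argument circular at exactly the critical point. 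The fix is the isomorphism of equivariant cohomologies indicated above (no evenness of either factor is needed before applying Theorem \ref{th2.3}), but you do not state or use it; you argue only that the two sides have ``the same parity behaviour'' by appealing to the false evenness claim. Finally, be aware of a circularity risk at the level of the literature: Theorem \ref{th2.3} is \cite[Th.~6.4]{Ch}, and if its proof there reduces to the nilpotent-cone statement of \cite{parity}, then your derivation, while internally consistent with how this paper quotes Theorem \ref{th2.3}, would not constitute an independent proof of the present theorem.
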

 
 This theorem has been proved in \cite{parity}.
 Once we have these theorems we can talk about parity sheaves on both the nilpotent cone and on graded pieces $\mf{g}_n$. 
 But unlike $\mc{IC}$s sheaves, the existence of parity sheaves is not automatic. The existence for the  nilpotent cone has been discussed in \cite[~ 4.3]{parity}. 
 Under the assumptions on the field characteristic, one of the main theorems in \cite{Ch} is the following.

\begin{theorem}[\cite{Ch}, Theorem. ~26]\label{th2.6}
	 Any cuspidal pair $(\mc{O},\mc{L})\in \ms{I}(\mf{g}_n)^{\cu}$ is clean and therefor parity sheaves exist for cuspidals.
\end{theorem}
 The existence of parity sheaves for graded Lie algebras has been proved in \cite[Theorem. ~36]{Ch} with the Assumption \ref{assume}.

 The following conjectures were a part of a series of  Mautner's unpublished conjectures. These were known for some specific cases but unknown in general. The author and P. Achar have work towards proving these conjectures in \cite{AC}.

\begin{conjecture} \label{cleanness}
	Under the Assumption \ref{assume}, Mautner's cleanness conjecture is true, that is every $0$-cuspidal in $\ms{I}(G)$ is $l$-clean.
\end{conjecture}

 \begin{conjecture}\label{cuparity}
	For any parabolic $P$ and Levi subgroup $L\subset P$ with $(C,\mc{F})\in \ms{I}(L)^{\cu}$, $\inn^G_P\mc{E}(C,\mc{F})$ is a  parity complex. 
\end{conjecture}
 
 	The results in this paper are dependent on the Assumption \ref{assume}  and the Conjectures \ref{cleanness} and \ref{cuparity}.

\section{Restriction}\label{sec2}
In this section our main goal is to prove that restriction sends parity complexes to parity complexes in the graded setting. This extends results about restriction related to parity complexes on the nilpotent cone.

 In \cite{Lu} it was proved that when the characteristic of $\Bbbk$ is $0$ then the restriction sends simple perverse sheaves to semi-simple complexes. Our result has some similarity with Lusztig's arguments but the failure of the decomposition theorem in positive characteristic means our proof requires new techniques.

Let $P$ be a parabolic subgroup with Levi subgroup $L$ containing $\chi(\mb{C}^\times)$ and $(\mc{O}',\mc{L}')\in \i'(\mf{l}_n)^{\cu}$. Let $P'$ be another parabolic subgroup with Levi subgroup $L'$ containing $\chi(\mb{C}^\times)$.  In this section we will study $\rnn^{\mg}_{\mf{p}'}\inn^{\mg}_{\mf{p}}\mc{E}(\mc{O}',\mc{L}')$.

Recall the induction diagram for the cuspidal pair $(\mc{O}',\mc{L}')\in \i'(\mf{l}_n)^{\cu}$ from \cite[Lemma ~5.2]{Ch}, 
\[
\begin{tikzcd}
\mc{O}'  & \mc{O}'+\mf{u}_n \arrow[l, "\pi"'] \arrow[r,  "e"] & G_0\times^{P_0} (\mc{O}'+\mf{u}_n) \arrow[r,"\mu"] & \mf{g}_n
\end{tikzcd},
\]
where $\pi, \mu$ and $e$ are from (\ref{eq2.1}). The induced space, $G_0\times^{P_0}(\mc{O}'+\mf{u}_n)$ can be identified with 

\[ \{(gP_0,x)\in G_0/{P_0}\times \mf{g}_n|\mathrm{Ad}(g^{-1})x\in \pi^{-1}(\mc{O}')\}. 
\] Under this identification the map $\mu$  simply becomes the projection on $\mf{g}_n$. Now consider the following Cartesian diagram,

\[\begin{tikzcd}
	\mc{O}'  & \mc{O}'+\mf{u}_n \arrow[l, "\pi"'] \arrow[r, hook,  "e"] & G_0\times^{P_0} (\mc{O}'+\mf{u}_n) \arrow[r,"\mu"] & \mf{g}_n \\ & &\mu^{-1}(\mf{p}'_n) \arrow[u, "j"] \arrow[rd, "\sigma"] \arrow[r, "\mu"] & \mf{p}'_n \arrow[u, "i", hook'] \arrow[d, "\pi'"]\\ 
	&&& \mf{l}'_n
\end{tikzcd}.
\] Here $\mu^{-1}(\mf{p}'_n)=\{(gP_0,x)\in G_0/{P_0}\times \mf{p}'_n|\mathrm{Ad}(g^{-1})x\in \pi^{-1}(\mc{O}')\}$ and the diagonal arrow $\sigma$, defined by the above diagram sends $(gP_0,x) \to \pi'(x)$. Using the above diagram and Theorem \ref{th2.6}, we see that  
\begin{equation}\label{eq3.2'}
	\rnn^{\mg}_{\mf{p}'}\inn^{\mg}_{\mf{p}}\mc{E}(\mc{O}',\mc{L}')\cong \sigma_!j^*(e^*\f^{G_0}_{P_0})^{-1}\pi^*\mc{L}'[\dim \mc{O}'].
\end{equation}
Let $\Omega$ be a $(P'_0,P_0)$-double coset of $G_0$. We define the subvariety, \[\mf{p}_{n, \Omega}=\{(gP_0,x)\in \Omega/{P_0}\times \mf{p}'_n|\mathrm{Ad}(g^{-1})x\in \pi^{-1}(\mc{O}')\} \subset \mu^{-1}(\mf{p}'_n),\] and note that $\mu^{-1}(\mf{p}'_n)=\cup_{\Omega} \pno$. Define $\sigma_{\Omega}=: \sigma|_{\pno}$. 
\begin{definition}\label{def3.1}
\begin{enumerate}

\item We define $K_{\Omega}=\sigma_{{\Omega}_!} ((e^*\f^{G_0}_{P_0})^{-1}\pi^*\mc{L}'[\dim \mc{O}']|_{\pno}) $.

	\item We define $\Omega$ to be good if there exists $g_0\in \Omega$ so that, $g_0Pg_0^{-1}\cap P'$ contains a Levi subgroup of $g_0Pg_0^{-1}$. We will say $\Omega$ is bad if it is not good. 
	
\end{enumerate}
\end{definition}

\subsection{Good double cosets} 
In this subsection we assume $\Omega$ is good. Let  $g_0\in \Omega$ and $g=hg_0$ with $h\in P_0'$ so that $(gP_0,z)\in \pno$. Consider the parabolic subgroup $Q=g_0Pg_0^{-1}$ which contains $\chi(\mb{C}^\times)$. Therefore, 
\[\mf{p}_{n, \Omega}=\{(hQ_0,x)\in P'_0Q_0/{Q_0}\times \mf{p}'_n|\mathrm{Ad}(h^{-1})x\in \mathrm{Ad}(g_0)\pi^{-1}(\mc{O}')\}.\] Let $M$ be a Levi subgroup of $Q$ containing $\chi(\mb{C}^\times)$ and let $\pi_Q: \mf{q}_n\to \mf{m}_n$ be the projection,  and let  $\mf{v}$ be the nil-radical of $\mf{q}$.   The space $\mf{p}_{n, \Omega} $ can be again identified with,
\begin{equation}\label{eq3.2}
	\{(h(P'_0\cap Q_0),x)\in P'_0/{P'_0\cap Q_0}\times \mf{p}'_n|\mathrm{Ad}(h^{-1})x\in \mc{O}''+\mf{v}_n\},
\end{equation}
where $\mc{O}''$ is the translation of $\mc{O}'$ by $\mathrm{Ad}(g_0)$. The map $\sigma_\Omega$ becomes $(h(P'_0\cap Q_0),x) \to \pi'(x)$.

 Let $\tL$ and $\tM$ be two Levi subgroups of $P'$ and $Q$  respectively sharing a common maximal torus $T$ containing $\chi(\mb{C}^\times)$, and denote their Lie algebras  by $\tl$ and $\tm$ respectively. Under the projection map, $\pi': \mf{p}'\to \mf{l}'$, $\tl$ can be identified with $\mf{l}'$. Similarly $\tm$ can be identified with $\mf{m}$ by the projection $\pi_Q: \mf{q}\to \mf{m}$.

\begin{theorem}\label{th2.2}
For $\Omega$ good, there exists a parabolic subgroup  $P''$ of $L'$ with Levi subgroup $L''$ such that $\mc{O}''\subset \mf{l}_n''$ and $(\mc{O}'',\mc{F})\in \i'(\mf{l}''_n)^{\cu}$ with $K_{\Omega}\cong \inn^{\mf{l}'}_{\mf{p}''}\E(\mc{O}'',\mc{F})$ up to some shift. 	\end{theorem}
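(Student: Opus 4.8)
The plan is to identify $K_\Omega$ with a transitive-base-change of an induction functor, using the explicit description of $\pno$ obtained just above the statement. Fix a representative $g_0 \in \Omega$ and set $Q = g_0 P g_0^{-1}$, a parabolic containing $\chi(\mb C^\times)$ with Levi $M$; the good hypothesis says (after adjusting $g_0$) that $Q \cap P'$ contains a Levi of $Q$. The geometric heart is the diagram I would set up between the four spaces $\mc O''+\mf v_n$, $P'_0 \times^{P'_0 \cap Q_0}(\mc O''+\mf v_n)$, $\mf p'_n$, and $\mf l'_n$: the map $\sigma_\Omega$ factors as the projection to $P'_0 \times^{P'_0\cap Q_0}(\mc O''+\mf v_n) \to \mf p'_n$ followed by $\pi' \colon \mf p'_n \to \mf l'_n$, exactly mirroring the shape of the induction diagram for $\inn^{\mf l'}_{\mf p''}$ with $P'' = \pi'(P'\cap Q)$ and $L'' = \pi'(L' \cap Q) \cong M \cap L'$ (identifying $\tl \cong \mf l'$ and $\tm \cong \mf m$ via $\pi'$ and $\pi_Q$ as in the paragraph preceding the statement).

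First I would set $(\mc O'', \mc L'')$ to be the image of $(\mc O', \mc L')$ under $\mathrm{Ad}(g_0)$ followed by $\pi_Q$; since $(\mc O',\mc L') \in \i'(\mf l_n)^{\cu}$ and $\mathrm{Ad}(g_0)$ and the projection identifications are isomorphisms of the relevant varieties, cuspidality is preserved, so $(\mc O'', \mc L'') \in \i'(\mf l''_n)^{\cu}$; the parity sheaf $\E(\mc O'',\mc L'')$ exists by \cite[Th.~6.11]{Ch} under Assumption~\ref{assume}. Second, I would trace the local system $(e^*\f^{G_0}_{P_0})^{-1}\pi^*\mc L'[\dim \mc O']$ through the identification $\mf p_{n,\Omega} \cong P'_0 \times^{P'_0\cap Q_0}(\mc O''+\mf v_n)$, checking that it goes to the analogous object $(\text{ind. equiv.})^{-1}(\pi_Q)^*\mc L''[\dim \mc O'']$ up to the shift $\dim \mc O' - \dim \mc O''$ (which is the difference of the $\mf u$-dimensions, hence a uniform shift). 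Third, I would invoke base change along the inclusion $\mf p'_n \hookrightarrow \mf g_n$ to see $\sigma_{\Omega !}$ of this object as $\pi'_!$ of the pushforward from $P'_0\times^{P'_0\cap Q_0}(\mc O''+\mf v_n)$ to $\mf p'_n$, which is precisely the definition of $\inn^{\mf l'}_{\mf p''}\E(\mc O'', \mc L'')$ — here one uses that $\E(\mc O'',\mc L'')$, being the IC/parity attached to a clean cuspidal, is clean, so its $\inn$ agrees with the naive pushforward construction.

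The main obstacle I anticipate is the bookkeeping in the third step: verifying that the nil-radical $\mf v$ of $\mf q$ decomposes compatibly with $P' \cap Q$ so that $\mc O'' + \mf v_n$, viewed inside $\mf p'_n$ via $P'_0 \cap Q_0$, literally becomes the source of the induction diagram for $\inn^{\mf l'}_{\mf p''}$ — i.e. that $\mf v_n \cap \mf p'_n$ together with the image in $\mf l'$ reproduces $\mc O''_{\mf l''} + \mf u^{P''}_n$ after applying $\pi'$. This requires the good condition in an essential way: it is exactly what guarantees $P' \cap Q$ is a parabolic of $L'$ (rather than something degenerate) with the right Levi, so that $\pi'(\mf v \cap \mf p')$ is the nil-radical of $\mf p''$ in $\mf l'$. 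Once that identification is pinned down, the rest is formal manipulation of $!$-pushforwards, base change, and the induction-equivalence isomorphisms, together with cleanness of the cuspidal parity sheaf to reconcile the two descriptions of $\inn$.
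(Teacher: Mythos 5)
Your proposal follows essentially the same route as the paper: translate by $\mathrm{Ad}(g_0)$ to obtain the cuspidal datum $(\mc{O}'',\mc{L}'')$ on the Levi $\tM$ of the parabolic $\tL\cap Q$ of $\tL\cong L'$, identify $\pno$ with $P'_0\times^{P'_0\cap Q_0}\bigl((\mc{O}''+\mf{v}_n)\cap\mf{p}'_n\bigr)$, and match $\sigma_{\Omega !}$ of the induced local system with $\inn^{\tl}_{\tl\cap\mf{q}}\E(\mc{O}'',\mc{L}'')$. The one step you defer as ``bookkeeping'' is exactly what the paper makes explicit: a vector bundle $d$ from $\pno$ onto the $\tL$-induction space, factored through fibers $\mf{u}'_n\cap\mf{v}_n$ and $U'_0/(U'_0\cap Q_0)$, and the shift in the final isomorphism is the rank of $d$ rather than $\dim\mc{O}'-\dim\mc{O}''$ (which vanishes, since $\mc{O}''=\mathrm{Ad}(g_0)\mc{O}'$).
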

\begin{proof}
Since $\Omega$ is good then there exists $g_0\in \Omega$ such that $P'$ contains a Levi subgroup $\tM'$ of $Q=g_0Pg_0^{-1}$. 	Let $T'$ be a maximal torus inside $\tM'$. Since $T$ and $T'$ both are maximal inside $P'\cap Q$,  there exists $c\in P'\cap Q$, such that $T=cT'c^{-1}$. Then we can consider $c\tM'c^{-1}$ to be the new $\tM'$ which contains $T$ as maximal torus. This implies $\tM'=\tM$. Now $\tL$ and $Q$ both contain $T$ and $\tM$ is a Levi subgroup of $Q$ containing $T$. Further, $\tM$ is a reductive subgroup of $P'$. Hence, $\tM\subset \tL$ and $\tM$ is a Levi subgroup for the parabolic $\tL\cap Q$ of $\tL$. 

Consider the induction diagram for $\tL$ with parabolic $\tL\cap Q$ and Levi $\tM$ with the identification of $\tm$ with $\mf{m}$,
 \[
\begin{tikzcd}
\mc{O}''  & \mc{O}''+(\mf{l}_n'\cap\mf{v}_n) \arrow[l, "\pi"'] \arrow[r,  "e"] & \tL_0 \times^{\tL_0\cap Q_0}(\mc{O}''+(\mf{l}'_n\cap \mf{v}_n)) \arrow[r,"\mu"] & \tl_n,
\end{tikzcd}
\]note that $\tL_0 \times^{\tL_0\cap Q_0}(\mc{O}''+(\mf{l}'_n\cap \mf{v}_n))$ can be identified with
\[\{(l(\tL_0\cap Q_0), \zeta)\in \tL_0/(\tL_0\cap Q_0)\times \mf{l}'_n|\hspace{2mm} \mathrm{Ad}(l^{-1})\zeta\in \mc{O}''+(\mf{l}'_n\cap \mf{v}_n))\}.\]

If $(h(P'_0\cap Q_0),x) $ belongs to $\pno$, this implies $\mathrm{Ad}(h^{-1})x\in \mc{O}''+\mf{v}_n$ by (\ref{eq3.2}). But also $h\in P'_0$ and $x\in \mf{p}'_n$ implies $\mathrm{Ad}(h^{-1})x \in \mf{p}'_n$. Therefore \begin{equation}\label{eq3.3}
	\mathrm{Ad}(h^{-1})x \in  (\mc{O}''+\mf{v}_n )\cap \mf{p}'_n.
\end{equation}

Now we define a map \[d:\pno \to \tL_0 \times^{\tL_0\cap Q_0}(\mc{O}''+(\mf{l}'_n\cap \mf{v}_n)) \] by
$d (h(P'_0\cap Q_0),x)= (l(\tL_0\cap Q_0), \zeta)$, where $l$ is the image of $h$ under the projection $P'_0\to \tL_0$ and $\zeta $ is the image of $x$ under the projection $\mf{p}'_n\to \mf{l}'_n$. This map is well-defined since $\mathrm{Ad}(l^{-1})\zeta=\pi'(\mathrm{Ad}(h^{-1})x)\in \mc{O}''+(\mf{l}'_n\cap \mf{v}_n)$. We claim that this is a vector bundle. 
By (\ref{eq3.2}) and (\ref{eq3.3}),   $\pno$ is identified with  $P'_0\times^{P'_0\cap Q_0} (\mc{O}''+\mf{v}_n\cap \mf{p}'_n)$. Then we define a vector bundle map $P'_0\times^{P'_0\cap Q_0} (\mc{O}''+\mf{v}_n\cap \mf{p}'_n)\to P'_0\times^{P'_0\cap Q_0} (\mc{O}''+\mf{v}_n\cap \mf{l}'_n)$,   induced by the linear map  $\pi':\mf{p}'_n\to \mf{l}'_n$ with kernel $\mf{u}_n\cap \mf{v}'_n$.  Now consider the quotient map $P'_0\times^{P'_0\cap Q_0} (\mc{O}''+\mf{v}_n\cap \mf{l}'_n )\to P'_0\times^{U'_0(P'_0\cap Q_0)} (\mc{O}''+\mf{v}_n\cap \mf{l}'_n )$, whose fiber is ${U'_0(P'_0\cap Q_0)} /(P'_0\cap Q_0)$ which is isomorphic to $U'_0/(U'_0\cap P'_0\cap Q_0)\cong U'_0/(U'_0\cap Q_0)$, which is isomorphic to a vector space. Now the space $P'_0\times^{U'_0(P'_0\cap Q_0)} (\mc{O}''+\mf{v}_n\cap \mf{l}'_n ) $ is isomorphic to $L'_0\times^{(L'_0\cap Q_0)} (\mc{O}''+\mf{v}_n\cap \mf{l}'_n ) $. The map $d$ is the composition of all these maps,
\[\begin{tikzcd}
	P'_0\times^{P'_0\cap Q_0} (\mc{O}''+\mf{v}_n\cap \mf{p}'_n) \arrow[d, "\pi'"]\\  P'_0\times^{P'_0\cap Q_0} (\mc{O}''+\mf{v}_n\cap \mf{l}'_n) \arrow[d] \\ P'_0\times^{U'_0(P'_0\cap Q_0)} (\mc{O}''+\mf{v}_n\cap \mf{l}'_n ) \cong L'_0\times^{(L'_0\cap Q_0)} (\mc{O}''+\mf{v}_n\cap \mf{l}'_n )
\end{tikzcd}
,\] where the first and the second maps are vector bundles defined above. 
Therefore $d$ is a vector bundle of rank $\dim (\mf{u}'_n\cap \mf{v}_n) +\dim   U'_0/(U'_0\cap Q_0) $.
Consider the diagram below,
 \[\begin{tikzcd}
 	\mc{O}' & \mc{O}'+ \mf{u}_n \arrow[l, "\pi"] \arrow[r, hook, "e"] & G_0\times^{P_0} (\mc{O}'+\mf{u}_n) \\
 	\mc{O}''\arrow[u, "\mathrm{Ad}(g_0^{-1})"]& \mc{O}''+\mf{v}_n \arrow[l, "\pi'"]\arrow[u, "\mathrm{Ad}(g_0^{-1})"] \arrow[r, hook, "e'"']  & G_0\times^{Q_0}(\mc{O}''+\mf{v}_n) \arrow[u, "\mathrm{Ad}(g_0^{-1})"]
 	\\
 	& \mc{O}'' +(\mf{p}'_n\cap \mf{v}_n) \arrow[u, hook', "k"]  \arrow[r, hook, "f"] \arrow[d, "d'"]&  P'_0 \times^{P'_0\cap Q_0}(\mc{O}''+(\mf{p}'_n\cap \mf{v}_n))  \arrow[rrd, "\sigma_\Omega"]\arrow[u, hook', "j"] \arrow[d, "d"]\arrow[uu, bend right=75, hook, "i'"]\\ 
 	& \mc{O}'' +(\mf{l}'_n\cap \mf{v}_n) \arrow[uu, bend left=70, hook', "k'"] \arrow[r, hook, "e_{L'}"]&  L'_0 \times^{L'_0\cap Q_0}(\mc{O}''+(\mf{l}'_n\cap \mf{v}_n)) \arrow[rr, "\mu_{L'}"] && \mf{l}_n'  
 \end{tikzcd}.\]
 We are trying to calculate $K_\Omega$, which is by definition $\sigma_{{\Omega}_!} i'^*\f^{G_0}_{P_0'}(e^*\f^{G_0}_{P_0})^{-1}\pi^*\mc{L}'[\dim \mc{O}']$.  
 
 From the first two rows of the above diagram,
 $\mathrm{Ad}(g_0^{-1})^*(e^*\f^{G_0}_{P_0})^{-1}\pi^*\mc{L}'[\dim \mc{O}'] \cong (e'^*\f^{G_0}_{Q_0})^{-1}\pi'^*\mathrm{Ad}(g_0^{-1})^*\mc{L}'[\dim \mc{O}'] $. Let us call $\mathrm{Ad}(g_0^{-1})^*\mc{L}'[\dim \mc{O}'] $ as $\mc{L}''$ and we get,
 
 \begin{equation}\label{eq3.5}
 	i'^*\f^{G_0}_{P_0'}(e^*\f^{G_0}_{P_0})^{-1}\pi^*\mc{L}'[\dim \mc{O}']\cong j^*\f^{G_0}_{P_0'} (e'^*\f^{G_0}_{Q_0})^{-1}\pi'^* \mc{L}''.
 \end{equation}
 Now from the commutative square involving $e',j,k,f$ and the formalism of the  equivariant derived category we get,
 \[ k^*\f^{Q_0}_{P_0'\cap Q_0}e'^* \f^{G_0}_{Q_0}\cong f^* \f^{P_0'}_{P_0'\cap Q_0}j^* \f^{G_0}_{P_0'}. 
 \] As  $ e'^* \f^{G_0}_{Q_0}$ and $f^* \f^{P_0'}_{P_0'\cap Q_0}$ are equivalences of categories, we have
 
 \begin{equation}\label{eq3.6}
 	j^*\f^{G_0}_{P_0'} (e'^*\f^{G_0}_{Q_0})^{-1}\pi'^* \mc{L}''\cong (f^*\f^{P_0'}_{P'_0\cap Q_0})^{-1}k^*\f^{Q_0}_{P'_0\cap Q_0}\pi'^* \mc{L}''.
 \end{equation}
 
 Using the fact that $\sigma_\Omega=\mu_{L'}\circ d$, (\ref{eq3.5}) and (\ref{eq3.6}), we see that, 
 \begin{equation}\label{eq3.7}
 K_\Omega\cong	\mu_{{L'}_!} d_! (f^*\f^{P_0'}_{P'_0\cap Q_0})^{-1}k^*\f^{Q_0}_{P'_0\cap Q_0}\pi'^* \mc{L}''.
 \end{equation} 
 
 Now consider the commutative diagram,
 \[\begin{tikzcd}
 \mc{O}'' &\mc{O}''+\mf{v}_n \arrow[l, "\pi'"]\\	
 \mc{O}''+(\mf{l}'_n\cap \mf{v}_n) \arrow[u, "\pi''"] & \mc{O}''+(\mf{p}'_n\cap \mf{v}_n) \arrow[l, "d'" ]\arrow[u, hook', "k"].
 \end{tikzcd}\]
From this we get, 
\[d'^*\pi''^*\f^{Q_0}_{P'_0\cap Q_0}\cong k^* \f^{Q_0}_{P'_0\cap Q_0}\pi'^*.\]

 Therefore (\ref{eq3.7}) becomes,

 \begin{equation}\label{eq3.8}
 	K_{\Omega}\cong \mu_{{L'}_!} d_! (f^*\f^{P_0'}_{P'_0\cap Q_0})^{-1}d'^*\pi''^*\f^{Q_0}_{P'_0\cap Q_0}\mc{L}''.
 \end{equation} 
By the formalism of the equivariant derived category from the square associating $d,d',f, e_{L'}$,

\[d'^* e_{L'}^* \f^{L_0'}_{L_0'\cap Q_0}\cong f^* \f^{P'_0}_{P_0'\cap Q_0}d^*.\] This implies the sheaf complex in (\ref{eq3.8}) becomes,

\begin{equation}\label{eq3.9}
K_\Omega\cong 	\mu_{{L'}_!} d_! d^*(e_{L'}^* \f^{L_0'}_{L_0'\cap Q_0})^{-1} \pi''^*\f^{Q_0}_{P'_0\cap Q_0} \mc{L}''.
 \end{equation}

 As $d$ is a vector bundle, we have,

 \[d_! d^*(e_{L'}^* \f^{L_0'}_{L_0'\cap Q_0})^{-1} \pi''^*\f^{Q_0}_{P'_0\cap Q_0}\mc{L}''\cong (e_{L'}^* \f^{L_0'}_{L_0'\cap Q_0})^{-1} \pi''^*\f^{Q_0}_{P'_0\cap Q_0}\mc{L}''[2m],
 \]
  where $m$ is the rank of d.
Therefore (\ref{eq3.9}) becomes,

\begin{equation}\label{eq3.10}
 K_\Omega\cong	\mu_{{L'}_!} (e_{L'}^* \f^{L_0'}_{L_0'\cap Q_0})^{-1} \pi''^*\f^{Q_0}_{P'_0\cap Q_0}\mc{L}''[2m].
 \end{equation} 

We can call $\f^{Q_0}_{P'_0\cap Q_0}\mc{L}''[2m] $ by $\mc{F}$, which is a $P_0'\cap Q_0$ equivariant local system on $\mc{O}''$ and (\ref{eq3.10}) is 
\[\inn^{\mf{l}'}_{\mf{l}'\cap \mf{q}} \mc{IC}(\mc{O}'',\mc{F}),\] and we are done.

\end{proof}

\subsection{Bad double cosets}In this section we assume $\Omega$ is bad.

\begin{theorem}\label{th2.3}
	For $\Omega$ bad, $K_{\Omega}=0$.
\end{theorem}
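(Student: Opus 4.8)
The plan is to carry out the same kind of geometric analysis as in the proof of Theorem~\ref{th2.2}, but to set things up so that $K_\Omega$ is realized as a parabolic induction of a complex which is itself a parabolic \emph{restriction} of the translated cuspidal sheaf along a \emph{proper} parabolic. Cleanness (hence cuspidality) of $(\mc{O}'',\mc{L}'')$ then forces that restriction, and so $K_\Omega$, to vanish.

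First I would normalize as in Theorem~\ref{th2.2}: choose $g_0\in\Omega$ and conjugate so that a common maximal torus $T$ lies in $P'\cap Q$, where $Q=g_0Pg_0^{-1}$, and let $\tL\subseteq P'$, $\tM\subseteq Q$ be the Levi subgroups containing $T$. The content of badness after this normalization is that $Q\cap P'$ contains no Levi of $Q$, which is equivalent to $\tM\not\subseteq P'$, i.e.\ $\tM\cap P'$ is a \emph{proper} parabolic of $\tM$ with Levi factor $\tM\cap\tL$; on Lie algebras, $\tm\cap\mf{p}'$ is a proper parabolic of $\tm$ with nonzero nil-radical $\tm\cap\mf{u}'$. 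This is exactly the complement of the hypothesis exploited in Theorem~\ref{th2.2}, where $\tM\subseteq\tL$ and $\tm\cap\mf{p}'=\tm$.

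Next I would redo the factorization of $\sigma_\Omega$. Starting from $\pno\cong P'_0\times^{P'_0\cap Q_0}\big((\mc{O}''+\mf{v}_n)\cap\mf{p}'_n\big)$ and peeling off the $\mf{u}'_n$-directions by vector-bundle and quotient maps as in Theorem~\ref{th2.2}, the residual data over $\mf{l}'_n$ is governed, on the ``output'' side, by the parabolic $\tL\cap Q$ of $\tL$ (Levi $\tM\cap\tL$), and, on the ``input'' side, by the parabolic $\tM\cap P'$ of $\tM$ (again Levi $\tM\cap\tL$). The difference from the good case is that now $\mc{O}''\subseteq\tm_n$ need not lie inside $\mf{p}'_n$, so $(\mc{O}''+\mf{v}_n)\cap\mf{p}'_n$ must be described through the projection $\pi_Q:\mf{q}_n\to\tm_n$; this is where the proper parabolic $\tm\cap\mf{p}'$ enters. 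Assembling the resulting commutative diagram, I expect to obtain, up to shift,
\[ K_\Omega\;\cong\;\inn^{\tl}_{\tl\cap\mf{q}}\Big(\rnn^{\tm}_{\tm\cap\mf{p}'}\,\E(\mc{O}'',\mc{L}'')\Big), \]
where $\E(\mc{O}'',\mc{L}'')$ is the cuspidal parity sheaf obtained from $\E(\mc{O}',\mc{L}')$ by translation by $\mathrm{Ad}(g_0)$. When $\Omega$ is good one has $\tm\cap\mf{p}'=\tm$, so $\rnn^{\tm}_{\tm\cap\mf{p}'}$ is the identity and this collapses to Theorem~\ref{th2.2}. Since $\inn^{\tl}_{\tl\cap\mf{q}}$ sends the zero object to the zero object, it suffices to show the inner restriction vanishes.

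Finally I would invoke cleanness: since $(\mc{O}'',\mc{L}'')$ is cuspidal it is clean (the cleanness theorem for cuspidal pairs, see Section~\ref{sec1}), so $\E(\mc{O}'',\mc{L}'')\cong\IC(\mc{O}'',\mc{L}'')$ up to shift; and since the underlying pair is cuspidal and $\tm\cap\mf{p}'$ is a proper parabolic of $\tm$, we have $\rnn^{\tm}_{\tm\cap\mf{p}'}\IC(\mc{O}'',\mc{L}'')=0$ — the graded incarnation of cuspidality, compatible with the vanishing of $\rnn^{M}_{M\cap P'}$ on $\mc{N}_M$ for the $0$-cuspidal pair underlying $(\mc{O}'',\mc{L}'')$; cf.\ \cite{Ch}, \cite{Lu}. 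Hence $K_\Omega=0$. I expect the main obstacle to be the factorization in the previous paragraph: checking honestly that the iterated vector-bundle and quotient maps assemble into a commutative diagram in which $\sigma_\Omega$ factors through the induction diagram of $\tL$ precomposed with the restriction diagram of $\tM$, and keeping track of the shifts and induction-equivalence twists so that no spurious perverse-degree discrepancy appears; the delicate point is the set-theoretic description of $(\mc{O}''+\mf{v}_n)\cap\mf{p}'_n$ when $\mc{O}''\not\subseteq\mf{p}'_n$, which has to be handled via $\pi_Q:\mf{q}_n\to\tm_n$.
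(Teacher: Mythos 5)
Your proposal reaches the correct conclusion by the same essential mechanism as the paper --- after normalizing by $g_0$ so that $\tM\cap P'$ is a \emph{proper} parabolic of $\tM$ (this is exactly where badness enters, and you are right to make it explicit, since the paper leaves it implicit), the vanishing comes from cuspidality of $(\mc{O}'',\mc{L}'')$ killing $\rnn^{\mf{m}}_{\mf{p}'\cap\mf{m}}\mc{E}(\mc{O}'',\mc{L}'')$. The difference is in packaging. You aim for the full Mackey-type functorial isomorphism $K_\Omega\cong\inn^{\tl}_{\tl\cap\mf{q}}\bigl(\rnn^{\tm}_{\tm\cap\mf{p}'}\E(\mc{O}'',\mc{L}'')\bigr)$ up to shift, and you correctly flag that assembling the vector-bundle and quotient maps into that factorization is the hard part. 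The paper sidesteps precisely that obstacle: it only needs $K_\Omega=0$, so it computes stalks. For each $y\in\mf{l}'_n$ it fibers $\sigma_\Omega^{-1}(y)$ over the partial flag variety $P'_0/(P'_0\cap Q_0)$, identifies each fiber (via the two direct-sum decompositions of $\mf{p}'_n\cap\mf{q}_n$) with $\bigl(\mc{O}''\cap(\gamma+(\mf{u}'_n\cap\mf{m}_n))\bigr)\times(\mf{u}'_n\cap\mf{v}_n)$, and recognizes the compactly supported cohomology of the first factor as a stalk of $\rnn^{\mf{m}}_{\mf{p}'\cap\mf{m}}\mc{E}(\mc{O}'',\mc{L}'')$, which vanishes. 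In effect the paper verifies your factorization only fiberwise, which is all that vanishing requires and avoids tracking shifts and induction-equivalence twists. Your route, if carried out, would buy more (an identification of $K_\Omega$ for arbitrary $\Omega$, uniformly covering Theorem~\ref{th2.2} as the case $\tm\cap\mf{p}'=\tm$), at the cost of proving the global isomorphism; the paper's route is more economical for the vanishing statement alone. Both arguments share the same unproved-in-this-paper input, namely that restriction along a proper parabolic annihilates a graded cuspidal pair, which must be traced back to cleanness and the nilpotent-cone cuspidality of the underlying $0$-cuspidal pair as in \cite{Ch}.
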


\begin{proof}
Assume $\Omega$ is bad. As in the good case,
 \[\pno= \{(h(P'_0\cap Q_0),x)\in P'_0/{P'_0\cap Q_0}\times \mf{p}'_n|\mathrm{Ad}(h^{-1})x\in \mc{O}''+\mf{v}_n\},\]	and $\sigma_\Omega: \pno\to \mf{l}_n'$ is defined by $(h(P_0'\cap Q_0), x)\to \pi'(x)$. Let $y\in \mf{l}_n'$ and we want to show, $(K_\Omega)_y=0$.  Consider the following Cartesian diagram, 
\[\begin{tikzcd}
\pno \arrow[r, "\sigma_\Omega"'] & \mf{l}_n'\\ {\sigma_\Omega}^{-1}(y) \arrow[u, hook'] \arrow[r, "\sigma_\Omega"] & y	\arrow[u, hook']
\end{tikzcd}.\]
To show  $(K_\Omega)_y=0$, using the above diagram and \cite[Lemma ~11]{Ch}, it is enough to show 
\[R\Gamma_c(((e^*\f^{G_0}_{P_0})^{-1}\pi^*\mc{L}'[\dim \mc{O}']|_{{\sigma_\Omega}^{-1}(y)}) )=0.\]Which is identified with, 
\[\h^*_{c}({\sigma_\Omega}^{-1}(y), ((e^*\f^{G_0}_{P_0})^{-1}\pi^*\mc{L}'[\dim \mc{O}']|_{\pno}) ).\]
For an algebraic variety $X$, let $p_X$ be the map from $X$ to a point. Let  $\mc{F}$ be $(e^*\f^{G_0}_{P_0})^{-1}\pi^*\mc{L}'[\dim \mc{O}']|_{\sigma_\Omega^{-1}(y)} $.  We must show $p_{{\sigma_\Omega^{-1}(y)}_!}\mc{F}=0$.

Consider the diagram below for $h\in P'_0$, 
\[\begin{tikzcd}
\pr_1^{-1}(h(P'_0\cap Q_0)) \arrow[r, hook, "j_h"] \arrow[d, ]& \sigma_\Omega^{-1}(y)\arrow[d, "\pr_1"] \arrow[r] & y\\ \{h(P'_0\cap Q_0)\} 	\arrow[r,hook, "i_h"] & P'_0/(P'_0\cap Q_0) \arrow[r] & y
\arrow[u, "="]\end{tikzcd},
\] where $\pr_1$ is the projection on the first coordinate. 

Since $p_{{\sigma_\Omega^{-1}(y)}_!}\mc{F}= p_{{P'_0/(P'_0\cap Q_0)}_!}{\pr_1}_!\mc{F} $, it suffices to show ${\pr_1}_!(\mc{F})=0$.
From the above diagram, it is enough to show, $i_h^* {\pr_1}_!(\mc{F})=0$ for each $h\in P_0'$.
By base change,
\begin{align}\label{eq3.4}
	i_h^* {\pr_1}_!(\mc{F})={p_{\pr_1^{-1}(h(P'_0\cap Q_0))}}_!j_h^*\mc{F},
\end{align}
where $\pr_1^{-1}(h(P'_0\cap Q_0))=\{z\in \mf{p}'_n| \hspace{2mm} \mathrm{Ad}(h^{-1})z\in \mc{O}''+\mf{v}_n, \pi'(z)=y\}$. Now $\mathrm{Ad}(h^{-1})z$ also belongs to $\mf{p}'_n$ as $h\in P'_0$ and $z\in \mf{p}'_n$. So $\mathrm{Ad}(h^{-1})z\in \mf{p}'_n\cap \mf{q}_n$. Since $P'$ and $Q$ both contain $T$, the intersection $\mf{p}'_n\cap \mf{q}_n$ can be written as the following direct sums,
\begin{equation}\label{eq3.12}
	\mf{p}'_n\cap \mf{q}_n=(\mf{l}'_n\cap \mf{m}_n)\oplus (\mf{u}'_n\cap \mf{m}_n) \oplus (\mf{p}'_n\cap \mf{v}_n),\hspace{.1cm}
 (\mf{l}'_n\cap \mf{m}_n)\oplus (\mf{l}'_n\cap \mf{v}_n)\oplus  (\mf{u}'_n\cap \mf{q}_n).
\end{equation}

 Using the first decomposition $\mathrm{Ad}(h^{-1})z$ can be written as $\gamma+ \nu+\mu$ and using the second decomposition $\gamma+ \nu'+\mu'$, where $\gamma\in (\mf{l}'_n\cap \mf{m}_n) $, $\nu\in (\mf{u}'_n\cap \mf{m}_n) $, $\mu\in (\mf{p}'_n\cap \mf{v}_n) $, $\nu'\in (\mf{l}'_n\cap \mf{v}_n) $ and $\mu'\in (\mf{u}'_n\cap \mf{q}_n) $. From the fact
\[\gamma+\nu+\mu=\gamma+\nu'+\mu',\]
we get $\mu'-\nu=\mu-\nu'\in (\mf{u}'_n\cap \mf{v}_n) $, call it $\tilde{\mu}$. Then $\mathrm{Ad}(h^{-1})z=\gamma+\nu+\nu'+\tilde{\mu}$. Now since $\gamma+\nu\in (\mf{p}'_n\cap \mf{m}_n) $,  $\gamma+\nu\in \mc{O}''$. As $\pi'(z)=y\in \mf{l}'_n$ is fixed, therefore $\gamma$ and $\nu'$ are being uniquely determined by the condition $\gamma+\nu'\in \mf{l}_n'$ and $\mathrm{Ad}(h^{-1})z-(\gamma+\nu')\in \mf{u}'_n$. So, 
\begin{align}\label{eq3.1}
	\pr_1^{-1}(h(P'_0\cap Q_0))\cong \mc{O}''\cap(\gamma+(\mf{u}'_n\cap \mf{m}_n ))\times (\mf{u}'_n\cap \mf{v}_n).
\end{align}
 Now consider the following diagram,
\[\begin{tikzcd}
\mc{O}' & \mc{O}'+\mf{u}'_n \arrow[l, "\pi"']\arrow[r, hook] & G_0\times^{P_0} (\mc{O}'+\mf{u}_n) & 	\\
\mc{O}'' \arrow[u, "Ad(g_0^{-1})"] && \sigma_\Omega^{-1}(y) \arrow[u,hook'] & \\\mc{O}''\cap(\gamma+(\mf{u}'_n\cap \mf{m}_n )) \arrow[u, hook] \arrow[rr, hook]\arrow[d, "\pr_2"] & &\mc{O}''\cap(\gamma+(\mf{u}'_n\cap \mf{m}_n ))\times (\mf{u}'_n\cap \mf{v}_n) \arrow[u, hook'] \arrow[d, "\pr_2"]\\ \{pt\} \arrow[rr, hook] && \mf{u}'_n\cap \mf{v}_n 
\end{tikzcd}
\]
From  (\ref{eq3.1}), proving
\[ R\Gamma_c(\mc{F}|_{\pr_1^{-1}(h(P'_0\cap Q_0))}) )=0\]is the same as proving,
\[R\Gamma_c(((e^*\f^{G_0}_{P_0})^{-1}\pi^*\mc{L}'[\dim \mc{O}']|_{\mc{O}''\cap(\gamma+(\mf{u}'_n\cap \mf{m}_n )\times(\mf{u}'_n\cap \mf{v}_n) )}) )=0.\] Which is from the above diagram is same as proving, 
\[R\Gamma_c(\mc{L}''|_{ \mc{O}''\cap(\gamma+(\mf{u}'_n\cap \mf{m}_n ))})=0.\] But the last term is isomorphic to the stalk of  \[\rnn^{\mf{m}}_{\mf{p}'\cap \mf{m}}\mc{E}(\mc{O}'',\mc{L}'').\] Which is indeed $0$ since $(\mc{O}'',\mc{L}'')\in \ms{I}(\mf{m}_n)^{\cu}$.
\end{proof}

\subsection{Composition of $\inn^{\mf{g}}_{\mf{p}}$ and $\rnn^{\mf{g}}_{\mf{p}}$} In this subsection we prove the following theorem about the composition of $\rnn^{\mg}_{\mf{p}'}$ and $\inn^{\mg}_{\mf{p}}$. 
\begin{theorem}\label{Th3.4} For any pair $(\mc{O}',\mc{L}')\in \ms{I}(\mf{l}_n)$, 
	$\rnn^{\mg}_{\mf{p}'}\inn^{\mg}_{\mf{p}}\mc{E}(\mc{O}',\mc{L}')$ is a parity complex.
\end{theorem}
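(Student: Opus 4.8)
The plan is to reduce the statement to the case where $(\mc{O}',\mc{L}')$ is cuspidal, and then to analyse $\rnn^{\mg}_{\mf{p}'}\inn^{\mg}_{\mf{p}}\mc{E}(\mc{O}',\mc{L}')$ by a d\'evissage along the stratification $\mu^{-1}(\mf{p}'_n)=\bigcup_{j}\pno$ set up above, feeding in Theorems \ref{th2.2} and \ref{th2.3} as the geometric input.

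\emph{Reduction to a cuspidal source.} First I would invoke \cite{Ch}: every $\mc{E}(\mc{O}',\mc{L}')$ on $\mf{l}_n$ is a direct summand of $\inn^{\mf{l}}_{\mf{q}}\mc{E}(\mc{O}'',\mc{L}'')$ for a suitable parabolic $Q\sbs L$ with $\chi(\mb{C}^\times)\sbs Q$, a Levi $L''\sbs Q$, and a cuspidal pair $(\mc{O}'',\mc{L}'')\in\ms{I}(\mf{l}''_n)^{\cu}$. Transitivity of parabolic induction in the graded setting then gives $\inn^{\mg}_{\mf{p}}\inn^{\mf{l}}_{\mf{q}}\cong\inn^{\mg}_{\mf{q}'}$ for a parabolic $Q'\sbs G$ with Levi $L''$, so that $\inn^{\mg}_{\mf{p}}\mc{E}(\mc{O}',\mc{L}')$ is a direct summand of $\inn^{\mg}_{\mf{q}'}\mc{E}(\mc{O}'',\mc{L}'')$. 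Since $\rnn^{\mg}_{\mf{p}'}$ is additive and a direct summand of a parity complex is again a parity complex, it is enough to treat the case where the source pair is cuspidal; the same argument applied to an arbitrary $(\mc{O},\mc{L})\in\ms{I}(\mf{g}_n)$ also delivers the announced statement that $\rnn^{\mg}_{\mf{p}'}$ sends parity complexes to parity complexes.

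\emph{D\'evissage.} Now assume $(\mc{O}',\mc{L}')\in\ms{I}(\mf{l}_n)^{\cu}$. Recall from the discussion before Theorem \ref{th2.2} that $\rnn^{\mg}_{\mf{p}'}\inn^{\mg}_{\mf{p}}\mc{E}(\mc{O}',\mc{L}')\cong\sigma_!\mathcal{K}$, where $\mathcal{K}=(e^*\f^{G_0}_{P_0})^{-1}\pi^*\mc{L}'[\dim\mc{O}']|_{\mu^{-1}(\mf{p}'_n)}$, and that one may order the $(P'_0,P_0)$-double cosets as $\Omega_0,\dots,\Omega_m$ so that $Z_j:=\mf{p}_{n,\Omega_0}\cup\cdots\cup\mf{p}_{n,\Omega_j}$ is closed for every $j$ and $\mu^{-1}(\mf{p}'_n)=\bigcup_j\mf{p}_{n,\Omega_j}$. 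Writing $V_0=\mu^{-1}(\mf{p}'_n)$ and, for $j\geq 1$, $V_j$ for the open complement of $Z_{j-1}$ (so $V_j\smallsetminus V_{j+1}=\mf{p}_{n,\Omega_j}$), and pushing forward by $\sigma$ the recollement triangle attached to the closed inclusion $\mf{p}_{n,\Omega_j}\hookrightarrow V_j$, one obtains distinguished triangles
\[
(\sigma|_{V_{j+1}})_!\big(\mathcal{K}|_{V_{j+1}}\big)\longrightarrow(\sigma|_{V_j})_!\big(\mathcal{K}|_{V_j}\big)\longrightarrow K_{\Omega_j}\xrightarrow{+1},
\]
which exhibit $\sigma_!\mathcal{K}=(\sigma|_{V_0})_!\big(\mathcal{K}|_{V_0}\big)$ as an iterated extension of $K_{\Omega_0},\dots,K_{\Omega_m}$.

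\emph{The subquotients are parity, and their assembly.} If $\Omega_j$ is bad then $K_{\Omega_j}=0$ by Theorem \ref{th2.3}. If $\Omega_j$ is good then Theorem \ref{th2.2} identifies $K_{\Omega_j}$, up to a shift, with $\inn^{\mf{l}'}_{\mf{p}''_j}\mc{E}(\mc{O}''_j,\mc{L}''_j)$ for a parabolic $P''_j$ of $L'$ and a cuspidal pair; this is a parity complex on $\mf{l}'_n$ because cuspidal pairs are clean (so $\mc{E}(\mc{O}''_j,\mc{L}''_j)$ exists) and because $\inn^{\mf{l}'}_{\mf{p}''_j}$ sends parity to parity by \cite{Ch}, applied to the reductive group $L'$ in place of $G$ --- legitimate since Assumption \ref{assume} passes to Levi subgroups of $G$ (a pretty good prime for $G$ is pretty good for $L'$, and the ``big enough'', Mautner cleanness and induction conditions for $G$ subsume those for every Levi of $G$, hence for every Levi of $L'$). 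It then remains to conclude that the iterated extension $\sigma_!\mathcal{K}$ of these parity complexes is itself a parity complex; this is the crux, since the class of parity complexes is not closed under arbitrary extensions. I would argue that each of the triangles above splits: by \cite{parity}, $\Hom(\mathcal{F},\mathcal{G}[k])=0$ for $k$ odd whenever $\mathcal{F}$ is $*$-even and $\mathcal{G}$ is $!$-even, so it suffices to track the parities of the shifts $m_{\Omega_j}$ appearing in Theorem \ref{th2.2} against the $*$- and $!$-evenness (up to shift) of the partial pushforwards $(\sigma|_{V_j})_!\big(\mathcal{K}|_{V_j}\big)$ --- the latter coming from the evenness up to shift of $\mathcal{K}$ together with the fact that $\sigma_{\Omega_j}$ restricts, on each relevant fibre, to an affine-bundle map onto its image, and, if needed, from the purity furnished by the contracting $\mb{C}^\times$-action carried by $\mf{g}_n$, $\mf{l}_n$, $\mf{l}'_n$ (valid since $n\neq 0$). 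I expect this last bookkeeping --- checking that each connecting morphism lands in an odd-degree $\Hom$ group and hence vanishes --- to be the main obstacle; once it is in place one gets $\sigma_!\mathcal{K}\cong\bigoplus_{\Omega_j\ \mathrm{good}}K_{\Omega_j}$, which is manifestly parity, and the reduction in the first step and the geometric Theorems \ref{th2.2} and \ref{th2.3} are then essentially formal.
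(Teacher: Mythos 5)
Your proposal follows essentially the same route as the paper's own proof: the d\'evissage along the closed filtration by $(P'_0,P_0)$-double cosets, with Theorems \ref{th2.2} and \ref{th2.3} identifying the subquotients $K_{\Omega_j}$, is exactly the induction carried out in the paper, and your preliminary reduction to a cuspidal source via \cite{Ch} and transitivity of induction is the argument the paper records only afterwards, in the corollary (the running hypothesis of this section is that $(\mc{O}',\mc{L}')$ is cuspidal, which is what the printed proof of the theorem actually uses, so your version in fact covers the stated generality more explicitly). The one place you diverge is the assembly step. The paper does not split the triangles: in the inductive step it concludes outright that the middle term of the distinguished triangle is parity because the two outer terms are, which amounts to invoking the stability of $*$-even and $!$-even complexes under extensions from \cite{parity}. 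You are right that the class of parity complexes (sums of an even and an odd part) is not extension-closed in general, so this one-line conclusion silently carries the same bookkeeping you postpone, namely that the $K_{\Omega_j}$, with the shifts $m_{\Omega_j}$ from Theorem \ref{th2.2}, are even of a consistent parity; but granted that, no vanishing of connecting morphisms is required, so the full splitting $\sigma_!(\cdot)\cong\bigoplus_{\Omega_j \text{ good}}K_{\Omega_j}$ that you treat as the crux is more than the statement needs. In short, your proof is the paper's proof with the final step done by (proposed) splitting instead of extension-stability of evenness; the ``main obstacle'' you flag is precisely the point the paper passes over without comment, and it is settled by the parity count of the shifts rather than by Hom-vanishing.
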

\begin{proof}
 From (\ref{eq3.2'}), 	\[\rnn^{\mg}_{\mf{p}'}\inn^{\mg}_{\mf{p}}\mc{E}(\mc{O}',\mc{L}')\cong \sigma_!((e^*\f^{G_0}_{P_0})^{-1}\pi^*\mc{L}'[\dim \mc{O}']|_{\mu^{-1}(\mf{p}'_n)}).\]
 For a $(P_0',P_0)$-stable subset $\Omega$ of $G_0$, recall $\pno$ from the beginning of section \ref{sec2}. We can choose  $(P'_0,P_0)$-double cosets $\Omega_0,\dots, \Omega_m$, such that $\mu^{-1}(\mf{p}_n')=\cup_{i=0}^m\pno_i$ and $\sigma_{\Omega_i}:\pno_i\rightarrow \mf{l}_n'$ is defined by $\sigma|_{\Omega_i}$. We assume for any $j\in [0,m]$,  $\pno_0\cup \dots, \cup \pno_j$ is closed in $\pno$. Let $\sigma_j$ be the map   $\pno_0\cup \dots, \cup \pno_j\rightarrow \mf{l}'_n$ defined by various $\sigma_{\Omega_i}$. By induction we will show for any $j$,  \[{\sigma_j}_!((e^*\f^{G_0}_{P_0})^{-1}\pi^*\mc{L}'[\dim \mc{O}']|_{\pno_0\cup \dots, \cup \pno_j}) \] is a parity complex. This will imply, \[{\sigma_m}_!((e^*\f^{G_0}_{P_0})^{-1}\pi^*\mc{L}'[\dim \mc{O}']|_{\mu^{-1}(\mf{p}'_n)}) \cong \sigma_!((e^*\f^{G_0}_{P_0})^{-1}\pi^*\mc{L}'[\dim \mc{O}']|_{\mu^{-1}(\mf{p}'_n)}) \] is a parity complex. Consider the maps
\[
	 \pno_0\cup \dots, \cup \pno_j \xhookrightarrow{i}  \pno_0\cup \dots, \cup \pno_{j+1}  \xhookleftarrow{j} \pno_{j+1},
\] with $i$ a closed embedding and $j$ an open embedding. This induces
distinguished triangle,
\[ i_!i^* {\sigma_{j+1}}_!((e^*\f^{G_0}_{P_0})^{-1}\pi^*\mc{L}'[\dim \mc{O}']|_{\mu^{-1}(\mf{p}'_n)}) \to {\sigma_{j+1}}_!((e^*\f^{G_0}_{P_0})^{-1}\pi^*\mc{L}'[\dim \mc{O}']|_{\mu^{-1}(\mf{p}'_n)}) \to j_* K_{\Omega_j}\to 
\]Now the first term is a parity complex by induction and third term is a  parity complex by Theorem \ref{th2.2} and \cite[Theorem ~37]{Ch}. Therefore the middle term is a parity complex.
\end{proof}
\begin{cor}
	For any parabolic subgroup $P$ with Levi $L$ containing $\chi(\mb{C}^\times)$ , $\rnn^\mf{g}_\mf{p}$ sends parity complexes to parity complexes.
\end{cor}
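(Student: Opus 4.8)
The plan is to reduce the Corollary to Theorem~\ref{Th3.4}. (Note that if Theorem~\ref{Th3.4} is invoked in the full generality of its statement, with the improper parabolic $P=G$ so that $\inn^{\mf{g}}_{\mf{g}}=\mathrm{id}$, the Corollary is formally a special case; since the proof given for Theorem~\ref{Th3.4} is written for cuspidal $(\mc{O}',\mc{L}')$, I carry out the reduction from the cuspidal case.) The key inputs are: (a) $\rnn^{\mf{g}}_{\mf{p}}=\pi_!\,i^*\,\f^{G_0}_{L_0}$ is triangulated, hence additive and shift-commuting; (b) over $\Bbbk$, every parity complex on $\mf{g}_n$ is a finite direct sum of shifts of indecomposable parity sheaves $\mc{E}(\mc{O},\mc{L})$ with $(\mc{O},\mc{L})\in\ms{I}(\mf{g}_n)$, these existing by \cite[Th.~6.11]{Ch} under Assumption~\ref{assume}; (c) the class of parity complexes is closed under shifts and under direct summands \cite{parity}; and (d) each $\mc{E}(\mc{O},\mc{L})$ is, up to shift, a direct summand of $\inn^{\mf{g}}_{\mf{q}}\mc{E}(\mc{O}'',\mc{L}'')$ for a suitable parabolic $Q$ containing $\chi(\mb{C}^\times)$, with Levi $M$, and a cuspidal pair $(\mc{O}'',\mc{L}'')\in\ms{I}(\mf{m}_n)^{\cu}$ (this is built into the construction of parity sheaves on $\mf{g}_n$ in \cite{Ch}).

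Granting these, the steps run as follows. By (a), (b) and (c) it suffices to show that $\rnn^{\mf{g}}_{\mf{p}}\mc{E}(\mc{O},\mc{L})$ is parity for every $(\mc{O},\mc{L})\in\ms{I}(\mf{g}_n)$. Fix such a pair and choose $Q$, $M$ and $(\mc{O}'',\mc{L}'')$ as in (d); if $(\mc{O},\mc{L})$ is already cuspidal we take $Q=G$ and then $\rnn^{\mf{g}}_{\mf{p}}\mc{E}(\mc{O},\mc{L})=0$ by cleanness of cuspidal pairs, so we may assume $Q$ proper. Applying the functor $\rnn^{\mf{g}}_{\mf{p}}$, which sends direct sum decompositions to direct sum decompositions, we find that $\rnn^{\mf{g}}_{\mf{p}}\mc{E}(\mc{O},\mc{L})$ is, up to shift, a direct summand of $\rnn^{\mf{g}}_{\mf{p}}\inn^{\mf{g}}_{\mf{q}}\mc{E}(\mc{O}'',\mc{L}'')$. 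By Theorem~\ref{Th3.4}, applied with $Q$ (Levi $M$, cuspidal datum $(\mc{O}'',\mc{L}'')$) in the role of the parabolic one induces from and the given $P$ (Levi $L$) in the role of the parabolic one restricts to, the complex $\rnn^{\mf{g}}_{\mf{p}}\inn^{\mf{g}}_{\mf{q}}\mc{E}(\mc{O}'',\mc{L}'')$ is parity. Hence its direct summand $\rnn^{\mf{g}}_{\mf{p}}\mc{E}(\mc{O},\mc{L})$ is parity, and the Corollary follows by reassembling the direct sum from (b).

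The main obstacle is input (d): pinning down, with a precise reference to \cite{Ch}, that every indecomposable parity sheaf on $\mf{g}_n$ occurs as a direct summand of an induced cuspidal parity sheaf. If \cite{Ch} does not record this verbatim, I would state it as a separate lemma and prove it along the lines of the existence argument \cite[Th.~6.11]{Ch}: for $(\mc{O},\mc{L})$ non-cuspidal one produces a proper parabolic $Q$ with Levi $M$ and a pair $(\mc{O}'',\mc{L}'')\in\ms{I}(\mf{m}_n)$ such that $\mc{E}(\mc{O},\mc{L})$ is a summand of the parity complex $\inn^{\mf{g}}_{\mf{q}}\mc{E}(\mc{O}'',\mc{L}'')$ (using that $\inn^{\mf{g}}_{\mf{q}}$ preserves parity, proved in \cite{Ch}), and then iterates, using transitivity of induction and the strict drop in semisimple rank, until a cuspidal datum is reached. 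The remaining points — triangulatedness of $\rnn^{\mf{g}}_{\mf{p}}$, closure of parity complexes under shifts and summands, and the Krull--Schmidt decomposition into indecomposables — are standard and formal.
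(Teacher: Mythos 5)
Your proposal is correct and follows essentially the same route as the paper: it invokes the fact from \cite{Ch} that every indecomposable parity sheaf $\mc{E}(\mc{O},\mc{L})$ on $\mf{g}_n$ is a direct summand of $\inn^{\mf{g}}_{\mf{p}'}\mc{E}(\mc{O}_L,\mc{E}_L)$ for some cuspidal datum, applies Theorem~\ref{Th3.4} to $\rnn^{\mf{g}}_{\mf{p}}\inn^{\mf{g}}_{\mf{p}'}\mc{E}(\mc{O}_L,\mc{E}_L)$, and concludes by closure of parity complexes under direct summands. Your additional remarks (the Krull--Schmidt reduction to indecomposables and the separate treatment of the cuspidal case) are consistent with, and only make explicit, what the paper leaves implicit.
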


\begin{proof}
	We know that for any pair $(\mc{O},\mc{L})\in \i'(\mf{g}_n)$, there exists a parabolic subgroup $P'$ of $G$ with Levi $L'$ and $(\mc{O}_L,\mc{E}_L)\in \i'(\mf{l}'_n)^{\cu}$ such that $\mc{E} (\mc{O},\mc{L}) $ appears as direct summand of $\inn^{\mf{g}}_{\mf{p}'}\mc{E} (\mc{O}_L,\mc{E}_L) $ \cite{Ch}. Therefore $ \rnn^\mf{g}_\mf{p} \mc{E} (\mc{O},\mc{L}) $ appears as a direct summand of $ \rnn^\mf{g}_\mf{p}\inn^{\mf{g}}_{\mf{p}'}\mc{E} (\mc{O}_L,\mc{E}_L)      $, which is a parity complex by Theorem \ref{Th3.4}. Hence $ \rnn^\mf{g}_\mf{p} \mc{E} (\mc{O},\mc{L}) $  is parity complex.
\end{proof}

\section{Fourier Sato transform}\label{sec4}

Fourier transform was studied by Hotta-Kashiwara \cite{HK} for $\mc{D}$-modules and later by Brylinski \cite{Bry} in characteristic $0$. Study of Fourier transform in the context of the Springer sheaf in positive characteristic has been initiated by  Juteau \cite{Ju}. 

  In this section we will recall the Fourier-Sato transform \cite{AJHR}.  Being a $t$-exact functor, it is obvious that Fourier transform sends semi-simple complexes to semi-simple complexes. Here we prove that in positive characteristic it sends parity complexes to parity complexes. In \cite{Lu}
 it has been proved that it sends the cuspidal sheaves to cuspidal sheaves in characteristic $0$. We prove here the same statement but in positive characteristic. Our proof is much  simpler than the proof in \cite{Lu}.

\subsection{Fourier transform on vector spaces}

Let $V$ be a  complex vector space and $H$ be an algebraic group acting linearly on $V$. An object $\mc{F}$ in the $H$-equivariant bounded derived category of $V$, $D^b_H(V,\Bbbk)$ called conic if for any $v\in V$ and $i\in \mb{Z}$, the sheaf $\h^i(\mc{F})|_{{\mb{C}.v}-0}$ is locally constant, Or equivalently if $\mc{F}$ is contructible with respect to the $\mb{C}^\times$-orbits in $V$.  Let us denote by $D^b_{H,con}(V,\Bbbk)$ the full subcategory of conic objects in $D^b_H(V,\Bbbk)$.
We denote the dual of $V$ by $V^*$. 

The Fourier transform is a functor,
\[\Phi_V: D^b_{H,con}(V,\Bbbk)\to D^b_{H,con}(V^*,\Bbbk).\]
 
 This functor was initially introduced in \cite[ ~3.7]{KS} and modified in \cite{AM} with a shift of $[\dim V]$ so that the functor is $t$-exact for the perverse $t$-structure \cite[Prop. ~10.3.8]{KS}. This functor is an equivalence of categories with the following properties:
 \begin{enumerate}
 	\item $\Phi_V(\delta_V)=\Bbbk_{V^*}[\dim V^*]$,  where $\delta_V$ is the skyscraper sheaf at $\{0\}$ on $V$. 
 	\item It commutes with the external tensor product, that is for $V=V_1\times V_2$, 
 	 \[ \Phi_V(M_1\boxtimes M_2)\cong \Phi_{V_1} (M_1)\boxtimes \Phi_{V_2}(M_2) ,
 	 \] with $M_1\in D^b_{H,con }(V_1)$ and $M_2\in D^b_{H,con}(V_2)$.
 \end{enumerate}

\subsection{Fourier transform on graded Lie algebras}
We consider the Fourier-Sato transform on the vector space $\mf{g}_n$.  Since  $G_0$-orbits in $\mg_n $ are $\mb{C}^\times$ stable, $D^b_{G_0,con}(\mf{g}_n) =D^b_{G_0}(\mf{g}_n)$. Since $\mg_n^*$ is $G_0$-isomorphic to $\mg_{-n}$, we obtain
\[\phign: D^b_{G_0}(\mf{g}_n)\to D^b_{G_0}(\mf{g}_{-n}). 
\]
Here we assume the parabolic subgroup $P$ and its Levi subgroup $L$ contain the image $\chi(\mb{C}^\times)$.
In this section we will use the notations,
 \[\ind_n : D^b_{L_0}(\mf{l}_n) \to D^b_{G_0}(\mf{g}_n)
 \]
and \[ \res_n: D^b_{G_0}(\mf{g}_n)\to D^b_{L_0}(\mf{l}_n),
\] for induction and restriction respectively.

In \cite{Lu} the commutativity of the Fourier transform with the induction and restriction has been proved.  Here we give a simpler proof using the proper base change and duality. 
\begin{theorem}\label{th3.1}
	For a parabolic subgroup $P\subset G$ with $\chi(\mb{C}^\times)\subset P$,
	$\Phi_{\mf{l}_n} \res_{n}\cong \res_{-n}\phign$.
\end{theorem}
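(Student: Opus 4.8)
The plan is to factor $\res_n$ as a pullback followed by a proper pushforward along two linear maps, commute the Fourier--Sato transform past each factor using its standard behaviour under linear maps, and then recognize the result as $\res_{-n}\phign$ by a base-change argument. Recall that, after the forgetful functor $\f^{G_0}_{L_0}$, one has $\res_n(\mc{F})=\pi_!\,i^{*}\mc{F}$, where $i\colon\mf{p}_n\hookrightarrow\mf{g}_n$ is the linear inclusion and $\pi\colon\mf{p}_n\to\mf{l}_n$ is the linear projection with kernel $\mf{n}_n$. Both maps are homogeneous of degree one for the $\mb{C}^{\times}$-action, so they preserve conic objects and the Fourier--Sato formalism applies.

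For a linear map $f\colon V\to W$ with transpose ${}^{t}f\colon W^{*}\to V^{*}$ one has, up to a cohomological shift by the relevant relative dimension, $\Phi_{W}\circ f_{!}\cong({}^{t}f)^{*}\circ\Phi_{V}$ and $\Phi_{V}\circ f^{*}\cong({}^{t}f)_{!}\circ\Phi_{W}$; see \cite{KS}. Applying the first identity to $\pi$ and the second to $i$,
\[
\Phi_{\mf{l}_n}\res_n\mc{F}\;=\;\Phi_{\mf{l}_n}\pi_{!}\,i^{*}\mc{F}\;\cong\;({}^{t}\pi)^{*}\,\Phi_{\mf{p}_n}\,i^{*}\mc{F}\;\cong\;({}^{t}\pi)^{*}\,({}^{t}i)_{!}\,\phign\mc{F},
\]
again up to shifts, which we defer. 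Using the fixed $G$-invariant nondegenerate form on $\mf{g}$, which restricts to perfect pairings between $\mf{g}_n$ and $\mf{g}_{-n}$ and between $\mf{l}_n$ and $\mf{l}_{-n}$ and identifies the annihilator of $\mf{p}_n$ in $\mf{g}_{-n}$ with $\mf{n}_{-n}$, we get $(\mf{g}_n)^{*}\cong\mf{g}_{-n}$, $(\mf{l}_n)^{*}\cong\mf{l}_{-n}$ and $(\mf{p}_n)^{*}\cong\mf{g}_{-n}/\mf{n}_{-n}$. Under these identifications ${}^{t}i\colon\mf{g}_{-n}\to\mf{g}_{-n}/\mf{n}_{-n}$ is the quotient map and ${}^{t}\pi\colon\mf{l}_{-n}\hookrightarrow\mf{g}_{-n}/\mf{n}_{-n}$ is the map induced by the inclusion $\mf{l}_{-n}\subset\mf{g}_{-n}$.

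Now the square
\[
\begin{tikzcd}
\mf{p}_{-n} \arrow[r, hook] \arrow[d] & \mf{g}_{-n} \arrow[d, "{}^{t}i"] \\
\mf{l}_{-n} \arrow[r, hook, "{}^{t}\pi"'] & \mf{g}_{-n}/\mf{n}_{-n}
\end{tikzcd}
\]
is Cartesian: its fibre product is $\{x\in\mf{g}_{-n}:\bar{x}\in\mf{l}_{-n}\}=\mf{l}_{-n}\oplus\mf{n}_{-n}=\mf{p}_{-n}$, and the two projections are precisely the inclusion $\mf{p}_{-n}\hookrightarrow\mf{g}_{-n}$ and the projection $\mf{p}_{-n}\to\mf{l}_{-n}$ that define $\res_{-n}$. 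Proper base change gives $({}^{t}\pi)^{*}({}^{t}i)_{!}\cong\pi_{!}\,i^{*}$ on $D^{b}_{G_0}(\mf{g}_{-n})$, that is, $\res_{-n}$ (again after the forgetful functor). Substituting this into the display above produces the isomorphism $\Phi_{\mf{l}_n}\res_n\mc{F}\cong\res_{-n}\phign\mc{F}$; the equivariance checks --- every map above is $G_0$-, $P_0$- or $L_0$-equivariant, and $\Phi$ commutes with the forgetful functors --- are routine and I would suppress them.

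The one subtle point is the shift bookkeeping: the two applications of the Fourier--Sato compatibility introduce shifts controlled by the relative dimensions of $\pi$, $i$ and their transposes, while the base-change step introduces none, and one must verify that these assemble into the asserted identity. I would handle this exactly as in the parallel statements for the Fourier transform versus parabolic induction and restriction on the nilpotent cone in \cite{AM} and \cite{AJHR}, using the normalization of $\Phi$ fixed there together with the equalities $\dim\mf{g}_n=\dim\mf{g}_{-n}$ and $\dim\mf{l}_n=\dim\mf{l}_{-n}$ forced by the pairing. I expect this normalization computation, rather than anything conceptual, to be the main obstacle; the geometric heart of the proof --- the identification of the relevant fibre product with $\mf{p}_{-n}$ --- is immediate.
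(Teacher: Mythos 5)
Your route is genuinely different from the paper's: you use the $(!,*)$-compatibilities of the Fourier--Sato transform with a linear map together with base change over the Cartesian square identifying $({}^{t}\pi)^{*}({}^{t}i)_{!}$ with $\pi_!i^*$ on $\mf{g}_{-n}$, whereas the paper first rewrites $\res_{-n}$ via the opposite parabolic, $\pi_!i^*\cong j^!\tau_*$ with $j\colon\mf{l}_{-n}\hookrightarrow\bar{\mf{p}}_{-n}$, $\tau\colon\mf{g}_{-n}\to\bar{\mf{p}}_{-n}$ (a conic/hyperbolic-localization statement quoted from [Mi]), and then applies the $(*,!)$-compatibilities of [Ac, Prop.~6.9.13]. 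Your geometric core is sound: the identifications of the transposes and of the fibre product with $\mf{p}_{-n}$ are correct, and base change for $Rf_!$ needs no properness. The problem is exactly the step you defer. Carrying out the bookkeeping with the paper's normalization ($\Phi_V$ $t$-exact, $\Phi_V(\delta_V)=\underline{\Bbbk}_{V^*}[\dim V^*]$), the two compatibilities contribute $[\dim\mf{l}_n-\dim\mf{p}_n]$ and $[\dim\mf{g}_n-\dim\mf{p}_n]$, and base change contributes nothing, so your chain yields
\[
\Phi_{\mf{l}_n}\res_n\mc{F}\;\cong\;\res_{-n}\phign\mc{F}\,[\dim\mf{l}_n+\dim\mf{g}_n-2\dim\mf{p}_n],
\]
and $\dim\mf{l}_n+\dim\mf{g}_n-2\dim\mf{p}_n=\dim\mf{u}_{-n}-\dim\mf{u}_n$, where $\mf{u}$ is the nilradical of $\mf{p}$. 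The equalities $\dim\mf{g}_n=\dim\mf{g}_{-n}$ and $\dim\mf{l}_n=\dim\mf{l}_{-n}$ that you invoke do not force this to vanish: the grading on $\mf{u}$ is not symmetric in $n\leftrightarrow-n$.

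Moreover, no cleverer bookkeeping can remove it for the unshifted functors $\res_{\pm n}$ as defined here. Take $G=SL_2$, $\chi(t)=\mathrm{diag}(t,t^{-1})$, $n=2$, $P=B$, $L=T$, so $\mf{l}_{\pm2}=0$, $\mf{p}_2=\mf{g}_2$, $\mf{p}_{-2}=0$, and test $\mc{F}=\underline{\Bbbk}_{\mf{g}_2}[1]$: the left-hand side is $R\Gamma_c(\mf{g}_2,\underline{\Bbbk})[1]\cong\Bbbk[-1]$, while (using, as in the paper's own proof of Theorem \ref{th1.3}, that $\phign$ sends the shifted constant sheaf to the skyscraper at the origin) the right-hand side is the stalk at $0$ of that skyscraper, namely $\Bbbk$; the discrepancy is exactly $[\dim\mf{u}_{-2}-\dim\mf{u}_2]=[-1]$. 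So what your argument honestly proves is the isomorphism up to the shift $[\dim\mf{u}_{-n}-\dim\mf{u}_n]$, and the shift-free statement requires renormalizing $\res_{\pm n}$ (or $\Phi$), as in Lusztig's characteristic-zero treatment. This is a real gap in your write-up as it stands --- you assert the shifts assemble to the stated identity, and they do not --- but it is a normalization gap rather than a conceptual one, and it is worth noting that the paper's own proof hides the same issue in its final step, where $\pi_*i^!$ with a shift is silently identified with $\pi_!i^*$; your computation in fact pinpoints the shift that should appear in the statement.
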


\begin{proof}
Let $\bar{\mf{p}}$ be the opposite Lie algebra of $\mf{p}$, which means $\bar{\mf{p}}=\mf{l}\oplus \bar{\mf{u}}$. Consider the following Cartesian diagram from \cite{Mi}, 
\[\begin{tikzcd}
\mf{p}_n \arrow[r, "\pi"] \arrow[d, hook', "i"]	& \mf{l}_n \arrow[d, hook', "j"]\\ \mf{g}_n \arrow[r, "\tau"] & \mf{\bar{p}}_n
\end{tikzcd},
\]
where $j$ is inclusion and $\tau$ is projection from $\mg_n$ to $\bar{\mf{p}}_n$. Therefore 
for $\mc{F}\in D^b_{G_0}(\mf{g}_n)$ we have,
\[ \res_n(\mathcal{F})= {\pi}_!i^* \f^{G_0}_{L_0}(\mathcal{F})\cong  j^!\tau_* \f^{G_0}_{L_0}(\mathcal{F}).
\]The Lie algebras, $\mf{p}_n$ and $\bar{\mf{p}}_{-n}$ are dual to each other, and the inclusion $i: \mf{p}_n\hookrightarrow \mf{g}_n$ has dual map $\tau: \mf{g}_{-n} \to \bar{\mf{p}}_{-n}$. Hence by \cite[prop. ~6.9.13]{Ac}, 
\begin{equation}\label{eq4.14}
	{\tau}_* \phign  \mathcal{F}\cong \Phi_{\mf{p}_n}i^! \mathcal{F}[\dim \mf{g}_n-\dim \mf{p}_n].
\end{equation}
Similarly, the map $j:\mf{l}_{-n}\to \bar{\mf{p}}_{-n}$ is dual to the map $\pi: \mf{p}_n\to \mf{l}_n$. Hence by the similar argument as above we have,
\begin{equation}\label{eq4.15}
	j^! \Phi_{\mf{p}_n}\cong \Phi_{\mf{l}_n} \pi_*[\dim \mf{l}_n-\dim \mf{p}_n ] .
\end{equation}

Therefore we get,
\begin{align*}
\res_{-n}	\Phi_{\mf{g}_n}\mc{F} &= j^!\tau_* \f^{G_0}_{L_0}  \Phi_{\mf{g}_n}  \mc{F} \\&
 \cong   j^! \Phi_{\mf{p}_n}i^! \f^{G_0}_{L_0}\mc{F}[\dim \mf{g}_n -\dim \mf{p}_n] \text{,  by } (\ref{eq4.14}) \\ & \cong  \Phi_{\mf{l}_n} \pi_*i^! \f^{G_0}_{L_0} \mc{F}[\dim \mf{l}_n +\dim \mf{g}_n-2\dim \mf{p}_n]  \text{, by }(\ref{eq4.15})\\ & \cong \Phi_{\mf{l}_n} \res_n \mc{F}.
\end{align*}

\end{proof}

\begin{cor}\label{cor1.2}
	For $P\subset G$ with $\chi(\mb{C}^\times)\subset P$,
	$\phign \ind_{n}\cong \ind_{-n}\Phi_{\mf{l}_n}$.
\end{cor}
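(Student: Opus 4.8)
The plan is to deduce Corollary~\ref{cor1.2} from Theorem~\ref{th3.1} by passing to adjoint functors. Recall first that $\res_n$ and $\ind_n$ are adjoint to one another, up to a cohomological shift --- this is a standard property of parabolic induction and restriction in the graded setting (see \cite{Ch}; it parallels the adjunctions on the nilpotent cone) --- so we may regard $\ind_n$ as the right adjoint of $\res_n$. Recall next that $\phign\colon D^b_{G_0}(\mf{g}_n)\to D^b_{G_0}(\mf{g}_{-n})$ and $\Phi_{\mf{l}_n}\colon D^b_{L_0}(\mf{l}_n)\to D^b_{L_0}(\mf{l}_{-n})$ are equivalences of categories, so each is simultaneously left and right adjoint to its quasi-inverse; and by \cite{KS},\cite{Ac} that quasi-inverse is, up to a shift, the Fourier--Sato transform $\Phi_{\mf{g}_{-n}}$, resp.\ $\Phi_{\mf{l}_{-n}}$, in the opposite degree. (The sign-involution $a^{*}$ that in general intervenes in $\Phi_{V^{*}}\circ\Phi_{V}$ is trivial here, because every object in sight is conic and $x\mapsto -x$ is the action of an element of the connected group $\mb{C}^{\times}$; this is the one point at which the grading $\chi$, whose image lies in both $G_0$ and $L_0$, enters.)

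Granting this, apply ``pass to the right adjoint'' to the isomorphism $\Phi_{\mf{l}_n}\res_n\cong\res_{-n}\phign$ of Theorem~\ref{th3.1}. Since the right adjoint of a composite $G\circ F$ is $F^{R}\circ G^{R}$, the left-hand side goes to $(\res_n)^{R}\circ(\Phi_{\mf{l}_n})^{R}\cong\ind_n\circ\Phi_{\mf{l}_{-n}}$ and the right-hand side to $(\phign)^{R}\circ(\res_{-n})^{R}\cong\Phi_{\mf{g}_{-n}}\circ\ind_{-n}$, giving $\ind_n\,\Phi_{\mf{l}_{-n}}\cong\Phi_{\mf{g}_{-n}}\,\ind_{-n}$. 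As $n$ was arbitrary, replacing $n$ by $-n$ yields $\ind_{-n}\,\Phi_{\mf{l}_n}\cong\Phi_{\mf{g}_n}\,\ind_n=\phign\,\ind_n$, which is the assertion.

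The step that genuinely needs care is the bookkeeping of cohomological shifts: the shift by which $\ind_n$ differs from the literal right adjoint of $\res_n$, and the shift introduced when composing a Fourier--Sato transform with its quasi-inverse, both involve dimensions such as $\dim\mf{g}_n$, $\dim\mf{l}_n$, $\dim\mf{u}_n$ that are not preserved by $n\mapsto -n$, so one must check that after the substitution these recombine to a net shift of zero, in agreement with the shift-free statement of the corollary (equivalently, one normalises $\ind_n$ and $\res_n$ by the perverse-type shifts that make the adjunction itself shift-free, after which nothing has to be tracked). If one prefers to avoid invoking the adjunction, the corollary can instead be obtained by repeating the argument in the proof of Theorem~\ref{th3.1} with the roles of $!$ and $*$ interchanged throughout: using that the square with vertices $\mf{p}_n,\mf{l}_n,\mf{g}_n,\bar{\mf{p}}_n$ is Cartesian one rewrites $\ind_n$ in terms of $\tau^{!}j_{!}$, and then applies the $!$-version of \cite[Prop.~6.9.13]{Ac} to the dual pairs of maps $(i,\tau)$ and $(\pi,j)$, exactly as in that proof.
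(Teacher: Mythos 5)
Your proposal is correct and follows essentially the same route as the paper, whose entire proof is the one-line observation that restriction is left adjoint to induction, i.e.\ it deduces the corollary from Theorem \ref{th3.1} by passing to adjoints exactly as you do. Your additional care about the quasi-inverse of the Fourier--Sato transform, the sign involution, and the shift bookkeeping only makes explicit what the paper leaves implicit.
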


\begin{proof}
	The proof follows from the fact that the restriction is left adjoint to the induction.
\end{proof}

Let $(\mc{O},\mc{L})\in \ms{I}(\mf{g}_n)$. By Theorem 36 of \cite{Ch}, there exists a parabolic subgroup $P$ with Levi subgroup $L$ containing $\chi(\mb{C}^\times)$ and a cuspidal pair $(\mc{O}_L,\mc{E}_L)\in \ms{I}(\mf{l}_n)^{\cu}$ such that $\mc{E}(\mc{O},\mc{L})$ appears as direct summand of $\ind_{n}\mc{IC}(\mc{O}_L,\mc{E}_L)$.

\begin{lemma}\label{lm4.3}
	Let $(\mc{O},\mc{L})\in \ms{I}(\mf{g}_n)$ and $P$, $L$ and $(\mc{O}_L,\mc{E}_L)\in \ms{I}(\mf{l}_n)^{\cu}$ be the associated parabolic, Levi subgroups and the cuspidal pair. The pair $(\mc{O},\mc{L})$ is cuspidal if and only if $L=G$.
\end{lemma}

\begin{proof}
	If $L=G$, then it is obvious that $(\mc{O},\mc{L})$ is cuspidal. On the other hand let $(\mc{O},\mc{L})$ be cuspidal. But by \cite[Theorem ~28]{Ch}, there exists a $L_0$-orbit $\mc{O}'\subset \mf{l}_n$ such that $\mc{O}'\subset \mc{O}$. This implies $\mc{O}\cap \mf{l}_n\ne \emptyset$ for a proper Levi subgroup $L$. But $(\mc{O},\mc{L})$ cuspidal means there exists a cuspidal pair $(C,\mc{F})\in \ms{I}(G)$ such that $\mc{O}\subset C$. This shows that $C\cap \mf{l}\ne \emptyset$, which contradicts the fact that $C$ is distinguished in $\mf{g}$.
\end{proof}

\begin{theorem}
	Let $(\mc{O},\mc{L})\in \ms{I}(\mf{g}_n)$ be a cuspidal pair. Then for any proper parabolic $P$, $\res_{n}\mc{IC}(\mc{O},\mc{L})=0$.
\end{theorem}
\begin{proof}
	By the definition of cuspidal pair for the graded setting, there exists a cuspidal $(C,\mc{E})\in \i'(G)^{\cu}$ such that $\mc{O}=C\cap \mf{g}_n$ and $\mc{L}=\mc{E}|_{\mc{O}}$. Let $P$ be a proper parabolic subgroup with Levi subgroup $L$ containing $\chi(\mb{C}^\times)$. Let $U$ be the unipotent radical of $P$. Let $x\in \mf{l}_n$. We want to show $(\res_{n} \mc{IC}(\mc{O},\mc{L}))_x=0$. Using the base change, 
\begin{equation*}
	(\res_n \mc{IC}(\mc{O},\mc{L}))_x=R\Gamma_c(\f^{G_0}_{L_0}\mc{L}|_{(x+\mf{u}_n)\cap \mc{O}}).
\end{equation*}  Now by the definition of cuspidal in the non-graded setting we have, $(\rnn^G_P \mc{IC}(C,\mc{E}))_x=0$ for any proper parabolic subgroup $P$ of $G$,  or equivalently 
\begin{equation}\label{eq4.7}
 	R\Gamma_c(\f^{G}_{L}\mc{E}|_{(x+\mf{u})\cap C}) =0.
 \end{equation}
By \cite[Lemma ~3.1]{Ch}, $R\Gamma_c(\mc{E}|_{(x+\mf{u})\cap C}) =0$. Consider the action of $\mb{C}^\times$ on $\mf{g}$, defined by 
$a.y=a^{-n} Ad(\chi(a))y$ and consider the variety $(x+\mf{u})\cap C$. The fixed point set inside this variety is $(x+\mf{u}_n)\cap \mc{O}$. Now using (\ref{eq4.7}) and \cite[Lemma ~3.5]{Ch} we get,
 \begin{equation*}
 	R\Gamma_c(\mc{L}|_{(x+\mf{u_n})\cap \mc{O} })=0.
 \end{equation*}
 From \cite[Lemma ~3.1]{Ch} It follows that,
 \begin{equation*}
 	R\Gamma_c(\f^{G_0}_{L_0}\mc{L}|_{(x+\mf{u_n})\cap \mc{O} })=0,
 \end{equation*}which proves the theorem.
\end{proof}

\begin{theorem}\label{th1.3}
\begin{enumerate}
	\item Let $(\mc{O},\mc{L})\in \ms{I}(\mf{g}_n)^{\cu}$. Then $\phign(\mc{IC}(\mc{O},\mc{L}))=\mc{IC}(\mc{O}',\mc{L}')$ for some $(\mc{O}',\mc{L}')\in \ms{I}(\mf{g}_{-n})^{\cu}$.
	\item $\phign$ sends parity sheaves to parity sheaves.
\end{enumerate}
	
\end{theorem}

\begin{proof}The functor 
$\phign: D^b_{G_0}(\mf{g}_n)\to D^b_{G_0}(\mf{g}_{-n})$ sends simple perverse sheaves to simple perverse sheaves. Now  our claim is that it sends non-cuspidal pairs to non-cuspidal pairs, which suffices to prove  that $\phign$ sends cuspidal pairs to cuspidal pairs as it is an equivalence of categories. 
We will prove this by induction on the semisimple rank of $G$. 
For the base case, when the semisimple rank of $G$ is $0$, it is a torus. Hence there is nothing to prove.
Now we want to show that  for the non-cuspidal pair $(\mc{O},\mc{L})\in\ms{I}(\mf{g}_n)$,  $\phign(\mc{IC}(\mc{O},\mc{L}))$ is non-cuspidal. Now for a while we pause this proof and prove (2) for any non-cuspidal pair.


Consider the non-cuspidal pair $(\mc{O},\mc{L})\in \i'(\mf{g}_n)$ and let $\mc{E}(\mc{O},\mc{L})$ be the associated parity sheaf. By   \cite[Theorem ~36]{Ch}, there exists a parabolic subgroup $P$ with a Levi $L$ containing $\chi(\mb{C}^\times)$ and  	$(\mc{O}_L,\mc{E}_L) \in \ms{I}(\mf{l}_n)^{\cu}$ such that $ \mc{E}(\mc{O},\mc{L})$ is a direct summand of $\ind_{n}\mc{IC}(\mc{O}_L,\mc{E}_L)$. By Lemma \ref{lm4.3}, $P$ is proper. By Corollary \ref{cor1.2}, $ \phign\mc{E}(\mc{O},\mc{L})$ is direct summand of $\ind_{-n} \Phi_{\mf{l}_n}\mc{IC}(\mc{O}_L,\mc{E}_L)$. By induction, $\Phi_{\mf{l}_n}\mc{IC}(\mc{O}_L,\mc{E}_L)$ is cuspidal. This implies that $\ind_{n} \Phi_{\mf{l}_n}\mc{IC}(\mc{O}_L,\mc{E}_L)$ is parity by \cite[Theorem ~26]{Ch}. Therefore  $ \phign\mc{E}(\mc{O},\mc{L})$ is a parity complex.  Also as $\phign$ is an equivalence of categories, it sends indecomposable objects to indecomposable objects. Hence $\phign \mc{E}(\mc{O},\mc{L})$ is an indecomposable parity complex. This proves assertion (2) for non-cuspidal pairs. By the cleanness property of cuspidal pairs \cite[Theorem ~22]{Ch}, if $(\mc{O},\mc{L})$ is cuspidal then $\mc{IC}(\mc{O},\mc{L})=\mc{E}(\mc{O},\mc{L})$. So (2) will follow from (1).

Now we come back to the proof of (1) for the non-cuspidal pair $(\mc{O},\mc{L})$. We know $\mc{IC}(\mc{O},\mc{L})$ appears as a composition factor of $\mc{E}(\mc{O},\mc{L})$. So $\phign\mc{IC}(\mc{O},\mc{L})$ appears as a composition factor of $\phign \mc{E}(\mc{O},\mc{L})$. As we already know $\phign$ sends parity sheaves to parity sheaves for a non-cuspidal pair,  $\phign \mc{E}(\mc{O},\mc{L})$ is a parity sheaf, say $ \mc{E}(\mc{O}',\mc{L}')$. As $\phign$ sends simple perverse sheaves to simple perverse sheaves, hence $\phign\mc{IC}(\mc{O},\mc{L})=\mc{IC}(\mc{O}'',\mc{L}'')$, for some $(\mc{O}'',\mc{L}'')\in \ms{I}(\mf{g}_n)$. Our goal is to prove $(\mc{O}'',\mc{L}'')$ is non-cuspidal. If $\mc{O}''$ is not open then by \cite[~4.4(a)]{Lu}, $(\mc{O}'',\mc{L}'')$ is non-cuspidal. Hence we may assume $\mc{O}''$ is open. But we also have $\mc{O}''\subset \bar{\mc{O}'}$ and this implies $\mc{O}''=\mc{O}'$. This forces $\mc{L}'=\mc{L}''$. As we already know $\phign(\mc{E}(\mc{O},\mc{L}))=\mc{E}(\mc{O}',\mc{L}')$ appears as direct summand of $\ind_{n}\Phi_{\mf{l}_n}\mc{IC}(\mc{O}_L,\mc{E}_L)$, where $P$ is a proper parabolic subgroup and $\Phi_{\mf{l}_n}\mc{IC}(\mc{O}_L,\mc{E}_L) $ is cuspidal. This means $\mc{IC}(\mc{O}',\mc{L}')$ appears as a direct summand of $\ind_{-n} \Phi_{\mf{l}_n}\mc{IC}(\mc{O}_L,\mc{E}_L)$. But this implies by adjunction, $\res_n\mc{IC}(\mc{O}',\mc{L}')\ne0$. Hence $(\mc{O}',\mc{L}')$ is non-cuspidal. This completes the proof of (1).

\end{proof}

\end{document}